\definecolor{blue2}{rgb}{0.67, 0.9, 0.93}
\numberwithin{equation}{section}
\newtheorem{theorem}{Theorem}[section]
\newtheorem{lemma}[theorem]{Lemma}
\newtheorem{proposition}[theorem]{Proposition}
\newtheorem{corollary}[theorem]{Corollary}
\theoremstyle{definition}
\newcommand{\R}{{\mathbb R}}
\newcommand{\Z}{\mathbb{Z}}
\newcommand\norm[1]{\left|\left|{#1}\right|\right|}
\newcommand{\abs}[1]{\left\vert#1\right\vert}
\title{\sc Propagation reversal on trees in the large diffusion regime}
\author[1]{Hermen Jan Hupkes \thanks{\tt hhupkes@math.leidenuniv.nl}}
\author[1,2]{Mia Juki\'c \thanks{\tt mia.jukic@tno.nl}}
\affil[1]{\small Mathematisch Instituut, Universiteit Leiden, P.O. Box 9512, 2300 RA Leiden, The Netherlands}
\affil[2]{\small Electromagnetic Signatures \& Propagation, Netherlands Organisation for Applied Scientific Research (TNO), 2509 JG The Hague, The Netherlands}
\title{\sc Propagation reversal on trees in the large diffusion regime}
\begin{document}
\maketitle
\begin{abstract}
    In this work we study  travelling wave solutions to  bistable reaction diffusion equations on bi-infinite $k$-ary trees in
    the continuum regime where the diffusion parameter is large.
    Adapting the spectral convergence method developed by Bates and his coworkers, 
    we  find an asymptotic prediction for the speed of 
    travelling front solutions. In addition, we prove that 
    the associated profiles converge to the solutions of
    a suitable limiting reaction-diffusion PDE. Finally, 
    for the standard cubic nonlinearity we provide explicit formula's
    to bound the thin region in parameter space where
    the propagation direction undergoes a reversal. 
\end{abstract}

\smallskip
\noindent\textbf{Keywords:} reaction-diffusion equations; lattice differential equations;
travelling waves; propagation reversal; wave pinning; tree graphs.

\smallskip
\noindent\textbf{MSC 2010:} 34A33, 37L60, 39A12, 65M22

\section{Introduction}
\label{sec:int}
In this work we study the reaction-diffusion-advection lattice  differential equation (LDE)
\begin{equation}\label{eqn:intro:LDE}
\begin{aligned}
    \dot{u}_i &= d\Big(ku_{i+1} - (k+1)u_i + u_{i-1}\Big) + g(u_i;a) ,
\qquad i\in \Z. 
\end{aligned}
\end{equation}
As explained below, we view the (real-valued) parameter $k >1$ as a branching factor. In addition, $d > 0$ encodes the \textit{diffusion strength} and the nonlinearity $g$ is of bistable type, for example 
\begin{equation}\label{eqn:intro:cubic}
g(u;a) = u(1-u)(u-a), \qquad  a \in (0,1).
\end{equation} 
This LDE is known \cite{Mallet-Paret1999} to admit travelling front solutions, i.e. solutions
of the form
\begin{equation}\label{eqn:intro:ansatz}
    u_i(t) = \Phi( i -ct), \qquad \qquad
    \Phi(-\infty) = 0, \qquad \qquad \Phi(+\infty) = 1. 
\end{equation}
In this paper we apply a version of the `spectral convergence' technique 
to study the behaviour of the pair $(c,\Phi)$ in the regime
where $d$ is large, the so-called continuum regime. In particular, our results supplement our earlier work \cite{Hupkes2022PropagationRF}, 
where we studied the small and intermediate $d$ regime using an
entirely different set of techniques.

\paragraph{Dynamics on $k$-ary trees}
Our primary motivation to study~\eqref{eqn:intro:LDE} is that 
the wavefronts \eqref{eqn:intro:ansatz} can be seen as
\textit{layer-wave} solutions of bistable reaction-diffusion equations posed on $k$-ary trees. In particular, the sign of the wavespeed $c$ determines which of the two stable roots of the nonlinearity $g$ can be expected to spread throughout the tree. We refer the reader to our previous work \cite{Hupkes2022PropagationRF} and a prior paper by Kouvaris, Kori and Mikhailov~\cite{Kouvaris2012} for numerical studies to support this claim and and an in-depth discussion of the potential application areas for our results.
Related results for monostable equations can be found in \cite{hoffman2016invasion}. Preliminary numerical investigations
show that for general trees, studying \eqref{eqn:intro:LDE}
with the \textit{average} branch-factor $k$ still has important
predictive capabilities.

For each fixed $k > 1$, the main results in~\cite{Hupkes2022PropagationRF} 
provide 
non-empty open sets of parameters $(a,d)$ where the expressions
$c > 0$, $c = 0$ respectively $c < 0$ are guaranteed to hold. In fact, upon steadily increasing the diffusion parameter $d >0$ for a fixed $a \sim 1$, waves transition from being pinned $(c = 0)$, travelling `down' the tree $(c > 0)$, being pinned once more $(c=0)$
towards finally travelling `up' the tree $(c < 0)$. This is illustrated
by the numerical results in Fig. \ref{intro:figure}.

These results were achieved by constructing explicit sub- and super-solutions and invoking the comparison principle. Although the boundaries of these regions agree reasonably well with numerical observations for small $d$,
they are  - by construction - not `asymptotically precise'. In particular, they do not accurately capture the transition region between $c > 0$ and $c < 0$, which appears to converge to a curve as $d$ increases. The main purpose of our results here is to increase our understanding of this transition
region where propagation reversal occurs.

\paragraph{Continuum regime}
In order to gain some preliminary intuition into the
diffusion-driven propagation reversal discussed above,
we recall from \cite{Hupkes2022PropagationRF} that
the LDE~\eqref{eqn:intro:LDE} can also
be interpreted as a spatial discretization of the 
partial differential equation (PDE)
\begin{equation}\label{eqn:intro:PDE}
    u_t(x,t)= \nu u_{xx}(x,t) + \beta u_x(x,t) +g\big(u(x,t);a\big),\quad x\in\mathbb{R}, \quad t>0.
\end{equation}
Indeed, using the correspondence $u_i(t) \sim u(ih, t)$
and the relation
\begin{equation}
\label{eq:int:def:nu:beta}
    \nu = \frac{1}{2}(k+1) d h^2 , \qquad \qquad 
    \beta = (k-1) d h,
\end{equation}
the standard central difference schemes 
\begin{equation}
\label{eq;int:central:diff:schemes}
u_x(x,t) \sim \frac{1}{2h} \big(u(x+h,t)-u(x-h,t)\big),
\qquad 
u_{xx}(x,t) \sim \frac{1}{h^2}\big(u(x+h,t)+u(x-h,t) - 2u(x,t) \big)
\end{equation}
reduce \eqref{eqn:intro:PDE} back to \eqref{eqn:intro:LDE}. Vice-versa, expanding the shifted terms in \eqref{eqn:intro:LDE} up to order $O(h^3)$ 
leads directly to \eqref{eqn:intro:PDE}.

For the cubic \eqref{eqn:intro:cubic}, the PDE \eqref{eqn:intro:PDE} admits explicit travelling front solutions 
\begin{equation}
    u(x,t) = \frac{1}{2}\big[ 1 + \tanh\big( (x - \sigma t)/\sqrt{8 \nu } \big) \big],
    \qquad \qquad 
   \sigma=\sqrt{2\nu}\left(a-\frac{1}{2} \right) - \beta    .
\end{equation}
Upon introducing the appropriate speed-scaling $\sigma = ch$
and recalling the coefficients~\eqref{eq:int:def:nu:beta},
we readily obtain the asymptotic prediction
\begin{equation}
\label{eqn:int:exp:for:c}
    c  = \sqrt{(k+1) d }\left(a - \frac{1}{2}\right) - (k-1) d ,
\end{equation}
which changes sign at the critical value
\begin{equation}
\label{eq:int:crit:diff}
    d(a,k) =  \frac{(k+1)}{(k-1)^2}\left(a - \frac{1}{2}\right)^2.
\end{equation}
The numerical results in Figure \ref{intro:figure} illustrate
that this prediction retains its accuracy for intermediate values of $d$, corresponding to values of $k$ that are far removed from the critical
regime $k \gtrsim 1$.

\paragraph{Spectral convergence}
Our main results provide a rigorous interpretation
and quantification for asymptotic predictions of this type.
This is achieved by using the spectral convergence approach
that was pioneered by Bates and his coworkers in
\cite{Bates2003}. The main 
feature of this approach is that
Fredholm properties of
linearized operators associated to travelling wave solutions
can be transferred from the spatially continuous setting of
\eqref{eqn:intro:PDE} to the spatially discrete
setting of \eqref{eqn:intro:LDE}.
The latter operators can then be used
in a standard fashion to close a fixed-point argument
and construct travelling front solutions to \eqref{eqn:intro:LDE}
that are close to those of their PDE counterpart
\eqref{eqn:intro:PDE}.

The main difficulty that needs to be overcome is that
this perturbation is highly singular, since (unbounded) derivatives
are replaced by difference quotients. As a consequence,
one must carefully work with weak limits
and use spectral properties of the (discrete) Laplacian together with
the bistable structure of the nonlinearity $g$
to counteract the loss of information that typically occurs
when using the weak topology.

In the present paper, the main additional complication
is the extra convective term appearing in the limiting
PDE  \eqref{eqn:intro:PDE} for $k > 1$. Indeed,
the coefficients $(\nu, \beta)$
introduced in \eqref{eq:int:def:nu:beta} scale differently
with respect to $h$. In particular, in our analysis
the associated extra terms cannot be seen as `small'
and must be handled with care by invoking the 
natural directional asymmetry that trees have.
A second main issue is that we want to obtain estimates
that are uniform in terms of the parameters $k$ and $a$. 
Such control is necessary in order to formulate quantitative results
for the critical curves $d(a,k)$.


\paragraph{Outlook}

Let us emphasize that we expect that our techniques
can be applied to a much larger class of problems
than the scalar setting of \eqref{eqn:intro:LDE}.
For example, following the framework developed in \cite{HJHFHNINFRANGE},
it should be possible to perform a similar analysis
for the FitzHugh-Nagumo system
and other multi-component reaction-diffusion problems.
Indeed, we do not use the comparison principle in this paper,
as opposed to the earlier work in \cite{Hupkes2022PropagationRF}.
We also note that the results in \cite{Bates2003} are actually strong enough to
handle discretizations of the Laplacian that have infinite range
interactions. In particular, our analysis here should also be applicable
for (regular) dense graphs. We also envision possible extensions to irregularly structured sparse graphs through the use of the recently developed theory of graphops \cite{backhausz2022action,kuehn2020network}.

It is well-known that travelling waves can be used as building blocks to uncover and describe more complex dynamics occurring in spatially extended systems \cite{Aronson1975nonlinear}. We therefore view the current paper as part of a push towards understanding and uncovering the behaviour of systems
in spatially structured environments, which can often naturally be modelled using graphs \cite{arenas2001communication,Selley2015, Slavik2020, Stehlik2017}. The ability to incorporate such spatial structures into models is becoming more and more important in our increasingly networked world.
Indeed, dynamical systems on networks are being
used in an ever-increasing range of disciplines, including chemical reaction theory \cite{feinberg1987chemical},
neuroscience \cite{sporns2016networks}, systems biology \cite{palsson2006systems},
social science \cite{scott1988trend}, epidemiology \cite{barrat2008dynamical}
and transportation
networks \cite{ran2012modeling}.

\paragraph{Organization}
We state our assumptions and main results in \S\ref{sec:mr}
and discuss the general strategy towards solving the
associated fixed point in \S\ref{sec:fix}. The relevant linear
theory is developed in \S\ref{sec:lin}, while the required nonlinear
estimates are obtained in \S\ref{sec:nl}.

\paragraph{Acknowledgments}
Both authors acknowledge support from the Netherlands Organization for Scientific Research (NWO) (grant 639.032.612).

\begin{figure}
\centering
\includegraphics[width=\textwidth]{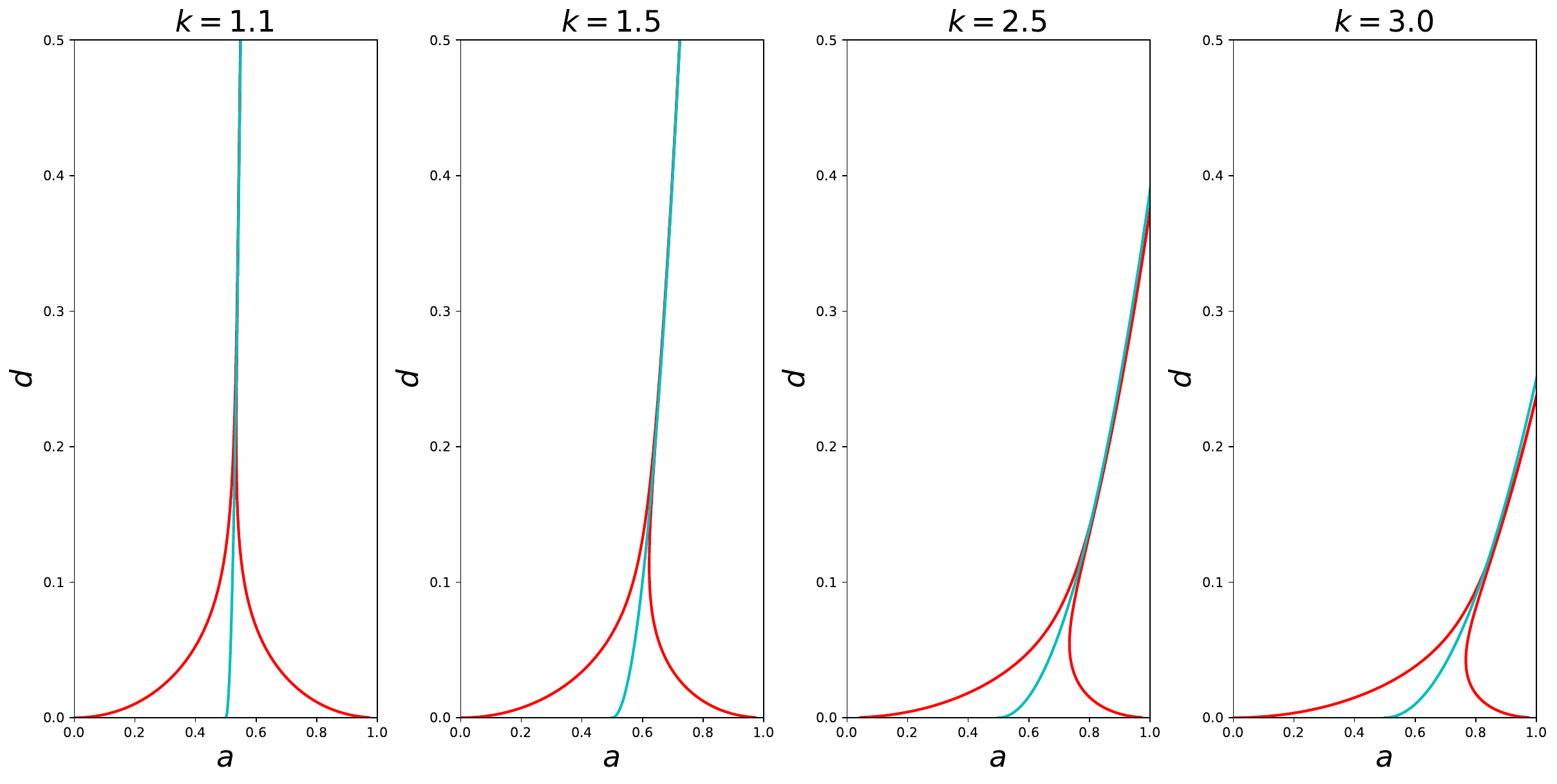}
   \caption{The red curves denote the numerically computed boundaries of the pinning region where $c(a,d,k) = 0$. The cyan
   curves originating from $(1/2, 0)$ represent
   the asymptotic prediction \eqref{eq:int:crit:diff}, which retains its accuracy even for relatively large values of the branching factor $k > 1$.}  
\end{figure} \label{intro:figure}

\section{Main results}
\label{sec:mr}
In order to state our main result for \eqref{eqn:intro:LDE},
we first formalize the bistability condition that we impose
on the nonlinearity $g$.
\begin{itemize}
    \item[(Hg)]
    The map $(u,a)\mapsto g(u;a)$ is $C^1$-smooth  on $\R\times(0,1)$ and 
    for all $a \in (0,1)$ we have the identities
    \begin{align*}
        g(0;a) &= g(a;a) = g(1;a) = 0, 
    \end{align*}
    together with the inequalities
    \begin{align*}
        g'(0;a)  &< 0, \qquad
        g'(1;a) < 0, \qquad
         g'(a;a) > 0
    \end{align*}
    and the sign conditions
    \begin{equation*}
       g(v;a)>0 \ \text{for}\ v\in (-\infty,0) \cup (a,1), \quad g(v;a)<0 \ \text{for}\ v\in (0, a) \cup (1, \infty).
    \end{equation*}
\end{itemize}
Fixing $k > 0$ and turning to travelling waves, it turns out to be convenient to link the diffusion strength $d > 0 $ to a new grid-size parameter $h > 0$
via the (invertible) relations
\begin{equation}
\label{eq:mr:rel:h:d}
    h(d,k) = \dfrac{\sqrt{2}}{\sqrt{d}\sqrt{k+1}},
    \qquad \qquad
    d(h,k) = \dfrac{2}{h^2(k+1)} ,
\end{equation}
which reduce to $d h^2 = 1$ in the symmetric case $k=1$.
To appreciate this, we recall the standard central difference schemes
\eqref{eq;int:central:diff:schemes}
and introduce the associated operators
\begin{equation}
\label{eq:mr:def:disc:derivs}
\begin{array}{lcl}
\delta_h^{0} v (\xi) &:=& \dfrac{v(\xi+ h) - v(\xi - h)}{2h}, \\[0.2cm]
\Delta_h v (\xi) &:= & \dfrac{v(\xi+h) - 2v(\xi) + v(\xi-h)}{h^2}.
\end{array}
\end{equation}
Seeking solutions to \eqref{eqn:intro:LDE} of the form
\begin{equation}\label{eqn:main:ansatz}
    u_i(t) = \Phi\Big(i h(d,k) - ct\Big),
\end{equation}
we find that the (rescaled) profile $\Phi$
must satisfy the mixed-type functional differential equation (MFDE)
\begin{align}\label{eqn:mr:MFDE:with:h}
    -c \Phi'(\xi) = 
    \Delta_h \Phi + 
    \dfrac{2 (k-1)}{h(k+1)} \partial^0_h  \Phi 
     + g(\Phi(\xi);a)
\end{align}
with $h = h(d,k)$,
to which we couple the standard spatial limits
\begin{equation}\label{eqn:mr:BC}
    \lim_{\xi\to-\infty} \Phi(\xi) = 0, \qquad  \lim_{\xi\to\infty} \Phi(\xi) = 1.
\end{equation}

On the one hand, this MFDE is covered by the general framework developed by Mallet-Paret in \cite{ Mallet-Paret1999}. This provides the solutions
that lie at the basis of our analysis in this paper.
\begin{proposition}\label{prop:main:MP}\cite[Thm. 2.1]{Mallet-Paret1999}
Suppose that (Hg) holds and pick 
$a \in (0,1)$ together with $d > 0$ and $k > 0$.
 Then there exist a speed $c=c(a,d, k)$ and a non-decreasing profile $\Phi 
= \Phi(a,d,k):\R\to \R$ that satisfy~\eqref{eqn:mr:MFDE:with:h} with $h = h(d,k)$,
 together with the boundary conditions~\eqref{eqn:mr:BC}.  
 Moreover, $c(a,d, k)$ is uniquely determined and depends $C^1$-smoothly on all parameters when $c(a,d, k) \neq 0$. In this case the profile $\Phi$ is $C^1$-smooth with  $\Phi'>0$ and unique up to translation.
\end{proposition}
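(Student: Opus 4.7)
The plan is to reduce Proposition~\ref{prop:main:MP} to the general existence and uniqueness theorem of Mallet-Paret~\cite{Mallet-Paret1999}, which applies to a broad class of bistable mixed-type functional differential equations on $\R$. Concretely, what I need to do is rewrite our wave equation \eqref{eqn:mr:MFDE:with:h} in the two-shift form covered there, and then verify that its standing bistability and hyperbolicity hypotheses are met.

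First I would recast \eqref{eqn:mr:MFDE:with:h} in its native LDE form. Substituting the ansatz~\eqref{eqn:main:ansatz} directly into \eqref{eqn:intro:LDE} yields
\begin{equation*}
-c\Phi'(\xi) = d\bigl(k\Phi(\xi+h) - (k+1)\Phi(\xi) + \Phi(\xi-h)\bigr) + g(\Phi(\xi);a),
\end{equation*}
and using the definitions in \eqref{eq:mr:def:disc:derivs} together with the identity $d(k+1)h^2 = 2$ from \eqref{eq:mr:rel:h:d}, a short algebraic computation shows that this is equivalent to \eqref{eqn:mr:MFDE:with:h}. It therefore suffices to produce a front for the LDE form, which is exactly of the two-shift type treated in~\cite{Mallet-Paret1999}.

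Next I would check the hypotheses. Condition (Hg) is precisely the required bistability: $g(\cdot;a)$ has three simple zeros $0,a,1$ with the outer ones linearly stable and the correct sign structure in between. For the hyperbolicity of the asymptotic equilibria $u_\ast \in \{0,1\}$, the characteristic function of the linearization reads
\begin{equation*}
\Delta_\pm(z) \;=\; cz + dk\, e^{zh} - d(k+1) + d\, e^{-zh} + g'(u_\ast;a),
\end{equation*}
whose real part along $z = i\omega$ equals $d(k+1)\bigl(\cos(\omega h)-1\bigr) + g'(u_\ast;a)$. Since $\cos(\omega h)-1 \leq 0$ and $g'(u_\ast;a)<0$ by (Hg), this quantity is strictly negative, so $\Delta_\pm$ has no roots on the imaginary axis. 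Hence the asymptotic states are hyperbolic in the MFDE sense of~\cite{Mallet-Paret1999}, and Theorem~2.1 there supplies a monotone profile $\Phi$ together with a uniquely determined speed $c(a,d,k)$.

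The smooth dependence on $(a,d,k)$ and uniqueness up to translation when $c\neq 0$ form the non-degenerate part of that theorem: the linearization about $\Phi$ is then Fredholm of index zero with a one-dimensional kernel spanned by $\Phi'$, and $C^1$-smoothness follows by a standard implicit function argument. The only mildly delicate step is the hyperbolicity computation above, since for $k>1$ the convective term breaks the reflection symmetry of the difference operator; the calculation shows, however, that this asymmetry only perturbs the imaginary part of $\Delta_\pm(i\omega)$, leaving the negativity of the real part intact.
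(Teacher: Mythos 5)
Your proposal is correct and follows essentially the same route as the paper, which establishes this proposition purely by invoking \cite[Thm.~2.1]{Mallet-Paret1999}; your rewriting of \eqref{eqn:mr:MFDE:with:h} back into the two-shift LDE form with positive coefficients $dk$ and $d$, together with the bistability and hyperbolicity checks, is exactly the intended application of that result.
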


On the other hand, under the convergence assumption
\begin{equation}
\label{eq:mr:speed:correction}
  c  + \dfrac{2(k-1)}{h(k+1)} \to \sigma,
\end{equation}
one may readily take 
the formal $h \downarrow 0$ limit of \eqref{eqn:mr:MFDE:with:h}
to arrive at the ODE
\begin{equation}
\label{eq:ode:with:sigma}
  - \sigma \Phi'= \Phi'' + g\big( \Phi ; a \big).
\end{equation}
A classic result (see e.g. \cite{Fife1977}) states that
there is a unique wavespeed $\sigma_{*;a}$ for which
\eqref{eq:ode:with:sigma} with the boundary conditions
\eqref{eqn:mr:BC} admits a solution $\Phi_{*;a}$. This waveprofile is unique up to translation and has $\Phi_{*;a}'> 0$.

The goal of this paper is to link these two viewpoints together from
a spectral convergence perspective, extending earlier
work in \cite{Bates2003} that applies to the $k=1$ case. 
The presence of the discrete derivative $\partial^0_h$ 
in \eqref{eqn:mr:MFDE:with:h} with a
coefficient of size $O(h^{-1})$ introduces complications. 
In fact, our approach can only keep this (large) term under control
if it satisfies a sign condition, which we will achieve
by exploiting the
asymmetry that the parameter $k > 1$ introduces into our problem.
In particular, we need to restrict the values of the detuning parameter $a$
by requiring positive values for the continuum wave speed $\sigma_{*;a}$.
To this end, we introduce the set
\begin{equation}
\begin{array}{lcl}
    \mathcal{A}_*^+ & = & \{ a \in (0,1) : \sigma_{*;a} > 0 \}
    \\[0.2cm]
    & = &  \{ a \in (0,1):  \int_{0}^1 g(s;a) ds < 0 \},
\end{array}
\end{equation}
noting that the second characterization can be obtained by integrating \eqref{eq:ode:with:sigma} against $\Phi'$. With this notation in place,
we are ready to state our main result.

\begin{theorem}[{see {\S}\ref{sec:fix}}]
\label{thm:mr:main}
Assume that $(Hg)$ holds and pick a compact subset $\mathcal{A}_\diamond \subset\mathcal{A}_*^+$. Then there exist constants
$K_{\diamond} > 0$ and $h_{\diamond} > 0$ so that for any $0 < h < h_{\diamond}$, any $a \in\mathcal{A}_\diamond$
and any $k \ge 1$
we have  the bound
\begin{equation}
\label{eq:mr:thm:main:bnd}
    |  c\big( a,d(h,k), k\big) + \dfrac{2(k-1)}{h(k+1)} - \sigma_{*;a} | +   \norm{\Phi\Big(a,d(h,k),k\Big) - \Phi_{*;a}}_{H^1(\R)}  \le K_{\diamond} h.
\end{equation}
\end{theorem}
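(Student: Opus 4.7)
The plan is to prove Theorem \ref{thm:mr:main} by setting up a perturbation problem around the continuum front $(\sigma_{*;a}, \Phi_{*;a})$ and closing a fixed point argument, following the spectral convergence strategy of \cite{Bates2003} adapted to the convective scaling introduced by $k > 1$. First I would introduce the effective speed $\sigma = c + \tfrac{2(k-1)}{h(k+1)}$ and rewrite the MFDE \eqref{eqn:mr:MFDE:with:h} as
\begin{equation*}
    \Delta_h \Phi + \sigma\,\Phi' + \tfrac{2(k-1)}{h(k+1)}\bigl(\delta_h^0 \Phi - \Phi'\bigr) + g(\Phi;a) = 0,
\end{equation*}
which is a direct analogue of $\Phi_{*;a}'' + \sigma_{*;a} \Phi_{*;a}' + g(\Phi_{*;a};a) = 0$ perturbed by the singular finite-difference correction. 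Decomposing $\Phi = T_\tau \Phi_{*;a} + v$ under a phase condition that fixes the translation (e.g.\ $L^2$-orthogonality of $v$ to the adjoint bounded solution of the limit linearization) and setting $\sigma = \sigma_{*;a} + \tilde\sigma$, substitution and use of the limit ODE yields a system of the schematic form
\begin{equation*}
    \mathcal{L}_h v + \tilde\sigma\,\Phi_{*;a}' \;=\; -E_h(\Phi_{*;a}) - \tilde\sigma v' - N(v),
\end{equation*}
together with the scalar phase equation, where $\mathcal{L}_h v := \Delta_h v + \sigma_{*;a} v' + g'(\Phi_{*;a};a) v + \tfrac{2(k-1)}{h(k+1)}(\delta_h^0 v - v')$, the consistency residual $E_h$ collects $\Delta_h \Phi_{*;a} - \Phi_{*;a}''$ and $\tfrac{2(k-1)}{h(k+1)}(\delta_h^0 \Phi_{*;a} - \Phi_{*;a}')$, and $N(v) = g(\Phi_{*;a}+v;a) - g(\Phi_{*;a};a) - g'(\Phi_{*;a};a) v$.

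The core linear work, carried out in \S\ref{sec:lin}, is to prove that $\mathcal{L}_h : H^2(\R) \to L^2(\R)$ is Fredholm of index zero and admits a right inverse whose operator norm stays uniformly bounded as $h \downarrow 0$, $a$ ranges over $\mathcal{A}_\diamond$, and $k \geq 1$, when restricted to the phase-orthogonal codimension-one subspace. The limit operator $\mathcal{L}_0 v = v'' + \sigma_{*;a} v' + g'(\Phi_{*;a};a) v$ is classical, with one-dimensional kernel spanned by $\Phi_{*;a}'$ and an exponential dichotomy at $\pm\infty$ driven by $g'(0;a), g'(1;a) < 0$. To transfer this picture to $\mathcal{L}_h$, I would run the standard spectral convergence contradiction: from a hypothetical sequence $\|v_n\|_{H^2} = 1$ with $\mathcal{L}_{h_n} v_n \to 0$ in $L^2$, extract a weak $H^1$ limit $v_\infty$ solving $\mathcal{L}_0 v_\infty = 0$, then rule out both $v_\infty \neq 0$ (via the phase condition) and $v_\infty = 0$ (via a no-concentration estimate using the asymptotic stability of the end states and the Rellich embedding on compact sets).

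The nonlinear and consistency estimates of \S\ref{sec:nl} are largely routine once the setup is fixed. Smoothness and exponential decay of $\Phi_{*;a}'$ give $\|\Delta_h \Phi_{*;a} - \Phi_{*;a}''\|_{L^2} = O(h^2)$ and $\|\delta_h^0 \Phi_{*;a} - \Phi_{*;a}'\|_{L^2} = O(h^2)$, so the $h^{-1}$ prefactor leaves $\|E_h\|_{L^2} = O(h)$ uniformly. Quadratic smoothness of $g$, together with $H^1 \hookrightarrow L^\infty$, yields $\|N(v)\|_{L^2} = O(\|v\|_{H^1}^2)$ and compatible Lipschitz bounds, and compactness of $\mathcal{A}_\diamond$ in $\mathcal{A}_*^+$ makes all constants uniform in $a$. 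Plugging these into a contraction-mapping iteration on a ball of radius $K_\diamond h$ in the product space $\{v \in H^2 : \langle v, \Psi_{*;a}\rangle_{L^2} = 0\} \times \R$ produces a unique pair $(v,\tilde\sigma)$ of size $O(h)$, and Proposition \ref{prop:main:MP} identifies this with the travelling wave $(c(a,d(h,k),k), \Phi(a,d(h,k),k))$ up to translation, establishing \eqref{eq:mr:thm:main:bnd}.

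The hard part, and what genuinely distinguishes this setting from the symmetric case $k=1$ treated in \cite{Bates2003}, is obtaining uniformity of $\mathcal{L}_h^{-1}$ over $k \geq 1$: the drift coefficient $\tfrac{2(k-1)}{h(k+1)}$ approaches the (very large) value $\tfrac{2}{h}$ as $k \to \infty$, and although the operator $\delta_h^0 - \partial$ is formally $O(h^2)$ on smooth inputs, this cancellation is fragile for merely $H^1$ functions. I expect to handle this by writing $\delta_h^0 v - v' = \tfrac{1}{2h}\int_{-h}^{h} (v'(\xi+s) - v'(\xi))\,ds$ and exploiting $L^2$-continuity of shifts, together with the positivity $\sigma_{*;a} > 0$ inherited from the restriction $a \in \mathcal{A}_*^+$, to absorb the problematic term into a dissipative estimate along the direction of propagation. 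Keeping the bounds independent of $k$ simultaneously is what ultimately justifies the $k$-uniform conclusion of the theorem.
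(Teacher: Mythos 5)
Your overall architecture matches the paper's proof: the same effective speed $c+\tfrac{2(k-1)}{h(k+1)}$, the same perturbation ansatz around $(\sigma_{*;a},\Phi_{*;a})$ with a normalization against the adjoint eigenfunction $\Psi_{*;a}$, a uniformly bounded (quasi-)inverse for the discrete linearization obtained through a spectral-convergence contradiction argument, an $O(h)$ consistency residual, quadratic bounds on $N(v)$, and a contraction on a ball of radius $O(h)$ (this is exactly Propositions \ref{prop:bates:inverseWithNormalz} and \ref{prop:fxp:reformulation:exist} and the fixed point \eqref{eq:fxp:setup:fix:pnt:prob}; the paper phrases the linear step through the $\delta$-shifted quantities \eqref{eq:linop:defCalEkm} and a quasi-inverse with the extra $\beta\Phi_{*;a}'$ direction, but that is presentational).

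The genuine gap is in your treatment of the singular term $\tfrac{2(k-1)}{h(k+1)}(\delta_h^0 v-v')$, which you yourself flag as the hard part. $L^2$-continuity of shifts gives $\norm{v'(\cdot+s)-v'}_{L^2}\to 0$ with no rate and, crucially, not uniformly over the unit ball of $H^1$, so your averaged representation cannot beat the $h^{-1}$ prefactor: along the contradiction sequence this term is in general of size $O(h^{-1})$ and is \emph{not} small. The paper's point is that it does not need to be small, only signed. The Fourier symbol of $M_{h,k}=\Delta_h+\tfrac{2(k-1)}{h(k+1)}(\delta_h^0-\partial)$ has nonpositive real part and satisfies $\omega\,\mathrm{Im}\,\widehat{M}_{h,k}(\omega)\le 0$, giving $\langle M_{h,k}v,v\rangle_{L^2}\le 0$ and $\langle M_{h,k}v,v'\rangle_{L^2}\le 0$ for all $v\in H^1$, uniformly in $h>0$ and $k\ge 1$ (with the reversed inequality for the adjoint, which is also needed, since surjectivity of $\mathcal{L}_{h,k;a}-\delta$ is obtained by running the same estimates on $\mathcal{L}^{\mathrm{adj}}_{h,k;a}$). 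Pairing the equation with $v_j'$, the large term is then simply discarded by sign, and the uniform positivity of $\sigma_{*;a}$ on the compact set $\mathcal{A}_\diamond\subset\mathcal{A}_*^+$ turns the surviving $\sigma_{*;a}\norm{v_j'}_{L^2}^2$ into the derivative bound of Lemma \ref{lem:bates:approx:bnd:on:deriv}, which is what prevents energy from leaking into unresolved oscillations. Your phrase about a ``dissipative estimate along the direction of propagation'' gestures at this, but without the explicit sign inequalities the $k$- and $h$-uniform invertibility your contraction needs does not follow from the tools you list. A second, smaller omission: to pass to the limit equation you need $M_{h_j,k_j}v_j\rightharpoonup V_*''$ for merely weakly convergent $v_j$, which the paper obtains by moving $M^{\mathrm{adj}}_{h,k}$ onto smooth compactly supported test functions and using the consistency bound there; your sketch leaves this identification implicit.
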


In order to illustrate the application range of this theorem and highlight the uniformity of
the estimates, we return to the setting of the cubic nonlinearity \eqref{eqn:intro:cubic}.
In this case we have 
\begin{equation}
    \sigma_{*;a} = \sqrt{2}(a - 1/2),
    \qquad \qquad
     \mathcal{A}_*^+ = (\frac{1}{2}, 1)
\end{equation}
and we recall the critical curve
\begin{equation}
 d(a,k) = (a-1/2)^2\dfrac{k+1}{(k-1)^2} 
\end{equation}
discussed in {\S}\ref{sec:int}.
Applying the bound \eqref{eq:mr:thm:main:bnd}
with $h$ as given in \eqref{eq:mr:rel:h:d}, we immediately find
\begin{equation}
\label{eq:mr:err:bnd:c:crit:curve}
    | c(  a,d(a,k), k) | \le \frac{ \sqrt{2} K_\diamond }{ \sqrt{d(a,k) (k+1)} }.
\end{equation}
Our final result formulates a number of such estimates directly in
terms of this critical curve $d$. In particular, we capture the
transition between waves that propagate up the tree
and down the tree. For explicitness,
we have absorbed all unknown constants into a single upper bound $k_* > 1$
for the branch factor. The price is that the exponent $\frac{1}{4}$ 
appearing in the correction curves is not optimal; it can 
be increased up to (but not including) $\frac{1}{2}$.

\begin{corollary}
\label{cor:cubic}
Let $g$ be the standard cubic nonlinearity \eqref{eqn:intro:cubic} and
pick $0 < \delta_a < \frac{1}{4}$. Then there exists $k_* > 1$
so that the following properties hold
for all $a \in [\frac{1}{2} + \delta_a, 1 - \delta_a]$
and $k \in (1, k_*)$.
\begin{itemize}
\item[(i)]{
  We have the bound
  \begin{equation}
  \label{eq:cor:bnd:on:c:crit}
      | c( a, d(a,k), k ) | \le d(a,k)^{-1/4} .
  \end{equation}
}
\item[(ii)]{
  We have $c(a,d,k) < 0$ whenever
  \begin{equation}
      d > d(a,k) \big[ 1 + d(a,k)^{-1/4} ].
  \end{equation}
}
\item[(iii)]{
  We have $c(a,d,k) > 0$ whenever 
  \begin{equation}
      \frac{1}{2} d(a,k) < d < d(a,k) \big[ 1 - d(a,k)^{-1/4}] .
  \end{equation}
}
\end{itemize}
\end{corollary}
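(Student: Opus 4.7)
The plan is to apply Theorem~\ref{thm:mr:main} with the compact set $\mathcal{A}_\diamond = [\frac{1}{2} + \delta_a, 1 - \delta_a] \subset \mathcal{A}_*^+$, which produces constants $K_\diamond$ and $h_\diamond$. The first step is to rewrite the wave-speed prediction in terms of the critical curve $d(a,k)$. Using $\sigma_{*;a} = \sqrt{2}(a - \tfrac{1}{2})$, the relation \eqref{eq:mr:rel:h:d}, and the explicit form $\sqrt{d(a,k)} = (a - \tfrac{1}{2})\sqrt{k+1}/(k-1)$, a short computation yields
\begin{equation*}
    \sigma_{*;a} - \frac{2(k-1)}{h(d,k)(k+1)} = \sqrt{2}\bigl(a - \tfrac{1}{2}\bigr)\Bigl[1 - \sqrt{d/d(a,k)}\Bigr],
\end{equation*}
so Theorem~\ref{thm:mr:main} can be rephrased as
\begin{equation*}
    c(a,d,k) = \sqrt{2}\bigl(a - \tfrac{1}{2}\bigr)\Bigl[1 - \sqrt{d/d(a,k)}\Bigr] + E, \qquad |E| \le K_\diamond h(d,k),
\end{equation*}
valid whenever $h(d,k) < h_\diamond$. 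Each of (i)--(iii) will follow from this expansion by comparing the explicit leading term with the error.

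For (i), evaluating at $d = d(a,k)$ annihilates the leading term, giving $|c| \le K_\diamond h(d(a,k),k)$. The direct substitution
\begin{equation*}
    \frac{K_\diamond\, h(d(a,k),k)}{d(a,k)^{-1/4}} = \frac{\sqrt{2}\, K_\diamond\, (k-1)^{1/2}}{(a - \tfrac{1}{2})^{1/2}(k+1)^{3/4}}
\end{equation*}
tends to zero as $k \to 1^+$ uniformly in $a \in \mathcal{A}_\diamond$, which covers (i). For (ii), writing $\epsilon := d(a,k)^{-1/4}$ and invoking the elementary inequality $\sqrt{1 + \epsilon} - 1 \ge \epsilon/3$ (valid for $\epsilon \le 3$), the leading term is bounded above by $-\tfrac{\sqrt{2}}{3}(a - \tfrac{1}{2})\epsilon$, while the hypothesis $d > d(a,k)$ forces $h(d,k) \le h(d(a,k),k)$; hence the ratio computation of (i) shows that $K_\diamond h$ is absorbed for $k$ sufficiently close to $1$, yielding $c < 0$. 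For (iii), the symmetric inequality $1 - \sqrt{1 - \epsilon} \ge \epsilon/2$ produces a positive lower bound $\tfrac{\sqrt{2}}{2}(a - \tfrac{1}{2})\epsilon$ on the leading term, and the constraint $d > d(a,k)/2$ yields $h(d,k) \le \sqrt{2}\, h(d(a,k),k)$, after which the same absorbing argument closes the estimate.

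The main obstacle is to choose a \emph{single} threshold $k_* > 1$ that makes all three estimates work uniformly in $a \in \mathcal{A}_\diamond$. This is possible precisely because the error-to-leading-term ratio scales like the positive power $(k-1)^{1/2}$ in every case, while the $a$-dependence is uniformly controlled thanks to $a - \tfrac{1}{2} \ge \delta_a$. Fixing $k_*$ as the minimum of the three thresholds produced above, and further shrinking it if needed to guarantee $h(d,k) < h_\diamond$ throughout the relevant regions, completes the proof. The suboptimal exponent $\tfrac{1}{4}$ appearing in (i)--(iii) is exactly the slack required to absorb the $O(h)$ error in Theorem~\ref{thm:mr:main}; any exponent strictly less than $\tfrac{1}{2}$ would work by the same argument.
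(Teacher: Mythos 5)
Your argument is correct, and it rests on the same pillar as the paper's proof: apply Theorem \ref{thm:mr:main} with $\mathcal{A}_\diamond = [\tfrac12+\delta_a,\,1-\delta_a]$ and play the explicit prediction $\sigma_{*;a} - \tfrac{2(k-1)}{h(d,k)(k+1)}$ off against the $O(h)$ error, choosing $k_*$ close to $1$ so that the error-to-prediction ratio, which scales like $(k-1)^{1/2}$ uniformly for $a-\tfrac12 \ge \delta_a$, is absorbed (and so that $h(d,k) < h_\diamond$ on the relevant ranges). Where you differ is in the algebraic organization: the paper solves the equation $c_{\mathrm{pred}}(\nu^2,a,k) = \theta K_\diamond h(\nu^2,k)$ exactly via the quadratic formula, producing boundary roots $\nu_\pm(a,k,\theta)$ through the auxiliary quantities $\gamma(a,k)$ and $\mathcal{E}(a,k,\theta)$, and then shows the regions stated in (ii)--(iii) sit inside the resulting sign-definite intervals $\mathcal{D}_\mp(a,k)$; you instead rewrite the prediction as $\sqrt{2}\,(a-\tfrac12)\bigl[1-\sqrt{d/d(a,k)}\bigr]$ and estimate its sign directly on the stated $d$-ranges using the elementary inequalities $\sqrt{1+\epsilon}-1 \ge \epsilon/3$ and $1-\sqrt{1-\epsilon} \ge \epsilon/2$, together with the monotonicity of $h(\cdot,k)$ to control the error there. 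Your route avoids the root computation entirely and is arguably more transparent about why the exponent $\tfrac14$ is merely slack for the $O(h)$ error; the paper's route has the advantage of exhibiting explicit sign-definite intervals $\mathcal{D}_\pm$ whose endpoints quantify the true transition window, which is extra information beyond the corollary as stated. Two small points of hygiene: in (ii) the ratio you need is not literally the one computed in (i) but carries an extra factor $(a-\tfrac12)^{-1}$ (and the constant $3$), which is harmless since $a-\tfrac12 \ge \delta_a$, and your use of $\sqrt{1+\epsilon}-1\ge\epsilon/3$ tacitly needs $\epsilon = d(a,k)^{-1/4} \le 3$, which your choice of $k_*$ (forcing $d(a,k)$ large) already guarantees; it would be worth saying both explicitly.
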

\begin{proof}
We write $\mathcal{A}_\diamond = [\frac{1}{2} + \delta_a, 1 - \delta_a]$
and recall the constants $K_\diamond$ and $h_\diamond$ defined in
Theorem \ref{thm:mr:main}. In addition, we write
\begin{equation}
    c_{\mathrm{pred}}(d, a, k) = \sigma_{*;a} - \dfrac{2 (k-1)}{h(d,k)(k+1)}
    = \sqrt{2}(a - \dfrac{1}{2}) - \dfrac{\sqrt{2d}(k-1)}{\sqrt{k+1}} 
\end{equation}
for the implicitly predicted wavespeed appearing
in \eqref{eq:mr:thm:main:bnd}. In order to exploit
these upper and lower bounds,
we pick $\theta \in \{-1, 1\}$, write $d = \nu^2$ and set out to solve
\begin{equation}
\label{eq:cor:pred:bnd}
c_{\mathrm{pred}}( \nu^2,a,k) =\theta K_{\diamond} h(\nu^2,k)
=    \dfrac{\sqrt{2}\theta K_{\diamond}}{\nu\sqrt{k+1}}.
\end{equation}

Upon introducing the expressions
\begin{equation}
\begin{array}{lclcl}
    \gamma(a,k)  &= &
    \dfrac{4 (k-1) K_{\diamond} d(a,k)^{1/4}}{(k+1)(a-1/2)^2} 
    & = & \dfrac{ 4 K_\diamond \sqrt{k-1} }{(k+1)^{3/4} (a-\frac{1}{2})^{3/2}},
    \\[0.3cm]
    \mathcal{E}(a,k,\theta)
    & =  & 
    1-\sqrt{1 - \theta d(a,k)^{-1/4}  \gamma(a,k)}
\end{array}
\end{equation}
the quadratic formula formally yields the two solutions
\begin{equation}
\begin{array}{lcl}
    \nu_-(a,k,\theta) = \frac{1}{2} \sqrt{d(a,k)} \mathcal{E}(a,k,\theta),
    \qquad \qquad
    \nu_+(a,k,\theta) = \frac{1}{2} \sqrt{d(a,k)} \big(2 -  \mathcal{E}(a,k,\theta) \big)
\end{array}
\end{equation}
for \eqref{eq:cor:pred:bnd}.
In addition, we write
\begin{equation}
    \nu_\diamond(k) = \dfrac{\sqrt{2}}{h_\diamond \sqrt{k+1}}
\end{equation}
and observe that $0 < h(d, k) < h_\diamond$ whenever $d > \nu_\diamond(k)^2$.

We now choose $k_* >1 $ in such a way that for all $k \in (1, k_*)$
and $a \in \mathcal{A}_\diamond$
we have
\begin{equation}
    d(a,k) \ge 1,  
    \qquad \qquad \gamma(a,k) \le 1,
\end{equation}
ensuring that the quantities above are all real-valued.
Upon writing
\begin{equation}
\begin{array}{lcl}
    \mathcal{D}_-(a,k) & = & \Big( \max\{\nu_\diamond(k), \nu_+(a,k,-1) \}^2,  \infty \Big), \\[0.2cm]
    \mathcal{D}_+(a,k) & = & \Big( \max\{ \nu_\diamond(k), \nu_-(a,k,+1)\}^2 ,  \nu_+(a,k,+1)^2 \Big), \\[0.2cm]
\end{array}
\end{equation}
it follows from  \eqref{eq:mr:thm:main:bnd}
that $c(a,d,k) < 0$ for $d \in \mathcal{D}_-(a,k)$
and $c(a,d,k) > 0$ for $d \in \mathcal{D}_+(a,k)$.

Using the bound
$\sqrt{1 + x} \le 1 + \frac{1}{2} x$ for $x \ge 0$, we obtain
\begin{equation}
    -\mathcal{E}(a,k, -1) \le \frac{1}{2} d(a,k)^{-1/4} \gamma(a,k)
    \le \frac{1}{2} d(a,k)^{-1/4}
\end{equation}
and hence
\begin{equation}
    \nu_+(a,k,-1)^2 \le \frac{1}{4} d(a,k)\big(4+2 d(a,k)^{-1/4}+ \frac{1}{4} d(a,k)^{-1/2} \big) \le d(a,k) \big( 1 + d(a,k)^{-1/4} \big).
\end{equation}
On the other hand,
the bound $1 - \sqrt{1-y} \le y$ for $0 \le y \le 1$
implies that
\begin{equation}
    \mathcal{E}(a,k, +1) \le d(a,k)^{-1/4} \gamma(a,k) \le d(a,k)^{-1/4}
\end{equation}
and hence
\begin{equation}
\begin{array}{lclcl}
    \nu_-(a,k,+1)^2 & \le & \frac{1}{4} d(a,k) d(a,k)^{-1/2} 
     & \le & \frac{1}{4} d(a,k) ,
    \\[0.2cm]
    \nu_+(a,k,+1)^2 & \ge & \frac{1}{4} d(a,k)\big(4 - 4d(a,k)^{-1/4} +d(a,k)^{-1/2} \big) 
    &\ge & d(a,k)\big(1 - d(a,k)^{-1/4} \big) .
\end{array}
\end{equation}
By further restricting $k_* > 1$ we can ensure that $d(a,k) > 2\nu_\diamond(k)^2$ and that \eqref{eq:mr:err:bnd:c:crit:curve}
can be simplified to \eqref{eq:cor:bnd:on:c:crit}, which
completes the proof.
\end{proof}

\section{Fixed point problem}
\label{sec:fix}

In this section we setup the fixed point problem that 
will enable us to extract the bounds \eqref{eq:mr:thm:main:bnd}.
In particular, we isolate the correct linear and nonlinear parts
and formulate a convergence result for the associated linear operators.
The overall strategy closely resembles the approach originally developed in \cite{Bates2003} and generalized
in \cite{hupkesbtravelling,HJHFHNINFRANGE}.

In order to extract the anticipated speed correction
\eqref{eq:mr:speed:correction}, we recall
the discrete derivatives \eqref{eq:mr:def:disc:derivs}
and introduce the combined operator
\begin{equation}
\label{eq:fxp:def:M:h:k}
    M_{h,k}v = \Delta_h v + \dfrac{2(k-1)}{h (k+1)} \big[ \partial_h^0 v - v' \big].
\end{equation}
This allows us to recast the MFDE \eqref{eqn:mr:MFDE:with:h} 
in the form
\begin{equation}
\label{eq:mfde:with:M}
    -\big[ c + \dfrac{2(k-1)}{h (k+1)} \big] \Phi' = M_{h,k} \Phi + g( \Phi;a).
\end{equation}
We now consider the pair $(c,\Phi)$ as a perturbation
from the profile $\Phi_{*;a}$ and the
anticipated wavespeed \eqref{eq:mr:speed:correction}
by writing
\begin{equation}
\label{eq:fxp:pert:phi:c}
    \Phi = \Phi_{*;a} + v,
    \qquad \qquad
    c =\sigma_{*;a}  - \frac{2(k-1)}{h(k+1)} +  \overline{c}
\end{equation}
A direct computation shows that solving \eqref{eq:mfde:with:M} is equivalent to finding a solution $(\overline{c}, v)$ to the problem
\begin{equation}\label{eqn:fxp:reform:LR}
    \mathcal{L}_{h,  k;a} v= \overline{c} \Phi_{*;a}'
    + \mathcal{R}_A(\overline{c}, v;a) + 
    \mathcal{R}_B(h,k;a),
\end{equation}
where we have introduced the two nonlinearities
\begin{equation}
\begin{array}{lcl}
    \mathcal{R}_A(\overline{c}, v;a) &=& 
     \overline{c} v'  + g(\Phi_{*;a}+v;a) -  g(\Phi_{*;a};a) - g_u(\Phi_{*;a};a)v,  \\[0.2cm]
     \mathcal{R}_B(h,k;a) & = & 
         M_{h,k} \Phi_{*;a} - \Phi_{*;a}''
\end{array}
\end{equation}
together with the linear operator
\begin{equation}
\mathcal{L}_{h,  k;a} v = - \sigma_{*;a} v' -  M_{h,k} v  - g'(\Phi_{*;a};a)v  .
\end{equation}

The key point is that the nonlocal operator $\mathcal{L}_{h, k;a}$ formally converges to the well-known second order differential operator $\mathcal{L}_{*;a}: H^2 \to L^2$ that acts as
\begin{equation}\label{eqn:limit:L0}
  [\mathcal{L}_{*;a} v](\xi) = - c_{*;a} v'(\xi)- v''(\xi) - g'(\Phi_{*;a}(\xi);a)v(\xi).
\end{equation}
This operator is Fredholm \cite{Bates2003} and satisfies
\begin{equation}
\label{eq:fxp:ker:range:l:star}
\mathrm{Ker} \, \mathcal{L}_{*;a}
= \mathrm{span}\{ \Phi_{*;a}' \},
\qquad \qquad 
\mathrm{Range} \, \mathcal{L}_{*;a} = \{ f : \langle \Psi_{*;a}, f \rangle_{L^2} = 0 \},
\end{equation}
where we introduced the adjoint eigenfunction
\begin{equation}
    \Psi_{*;a}(\xi) :=  \Phi_{*;a}'(\xi) e^{-\sigma_{*;a} \xi} / \int \Phi_{*;a}'(\xi')^2 e^{-\sigma_{*;a} \xi'} \, d \xi'
\end{equation}
normalized to have
\begin{equation}
    \langle \Psi_{*;a}, \Phi_{*;a}' \rangle_{L^2} = 1.
\end{equation}
Our first main contribution is the analogue of \cite[Thm. 2.3]{hupkesbtravelling} for the current setting and shows
in a sense that the characterization \eqref{eq:fxp:ker:range:l:star} can be transferred to the operators $\mathcal{L}_{h,k;a}$.

\begin{proposition}[{see {\S}\ref{sec:lin}}]
\label{prop:bates:inverseWithNormalz}
Suppose that $(Hg)$ holds and pick a compact set
$\mathcal{A}_\diamond \subset \mathcal{A}_*^+$.
Then there exist constants $K > 0$
and $h_0 > 0$ together with linear maps
\begin{equation}
\beta_{h,k;a}: L^2 \to \mathbb{R},
\qquad \qquad
\mathcal{V}_{h,k;a}: L^2 \to H^1,
\end{equation}
defined for all $h \in (0, h_0)$, $k \ge 1$
and $a \in \mathcal{A}_\diamond$,
so that the following properties hold true
for all such $(h,k,a)$.
\begin{itemize}
\item[(i)]{
  For all $f \in L^2$ 
  we have the bound
  \begin{equation}
    \abs{ \beta_{h,k;a} f }
      + \norm{ \mathcal{V}_{h,k;a} f }_{H^1}
           \le K \norm{f}_{L^2}.
  \end{equation}
}
\item[(ii)]{
  For all $f \in L^2$, 
  the pair
  \begin{equation}
    (\beta, v) = \big( \beta_{h,k;a} f, \mathcal{V}_{h,k;a} f \big) \in
    \mathbb{R} \times H^1
  \end{equation}
  is the unique solution to the problem
  \begin{equation}
    \label{eq:bates:qinv:problem}
    \mathcal{L}_{h,k;a} v = f + \beta \Phi_{*;a}'
  \end{equation}
  that satisfies the normalization condition
  \begin{equation}
    \label{eq:bates:qinv:problem:norm}
    \langle \Psi_{*;a} , v \rangle_{L^2} = 0.
  \end{equation}
}
\item[(iii)]{
  We have $\beta_{h,k;a} \Phi_*' = -1$. 
}
\end{itemize}
\end{proposition}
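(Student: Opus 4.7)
The plan is to implement the spectral convergence strategy of Bates and coworkers \cite{Bates2003,hupkesbtravelling} for this operator family. The backbone is a uniform a priori estimate: any pair $(v,\beta) \in H^1 \times \R$ with $\langle \Psi_{*;a}, v\rangle_{L^2} = 0$ satisfying $\mathcal{L}_{h,k;a} v = f + \beta \Phi_{*;a}'$ should obey $|\beta| + \|v\|_{H^1} \le K \|f\|_{L^2}$, uniformly in $h \in (0,h_0)$, $k \ge 1$ and $a \in \mathcal{A}_\diamond$. I would argue by contradiction: assume sequences $(h_n, k_n, a_n, f_n, v_n, \beta_n)$ with $\|f_n\|_{L^2} \to 0$ and $|\beta_n| + \|v_n\|_{H^1} = 1$. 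Compactness of $\mathcal{A}_\diamond$ together with $H^1$-boundedness produces weak limits $a_n \to a_\infty$, $\beta_n \to \beta_\infty$ and $v_n \rightharpoonup v_\infty$ (strongly in $L^2_{\mathrm{loc}}$ by Rellich). The nontrivial case $h_n \to 0$ requires extracting a limiting equation for $v_\infty$, from which the kernel/range characterization \eqref{eq:fxp:ker:range:l:star} of $\mathcal{L}_{*;a_\infty}$ forces $\beta_\infty = 0$ and $v_\infty = 0$; the strong-convergence upgrade that yields the contradiction with $|\beta_n| + \|v_n\|_{H^1} = 1$ comes from Fourier/ellipticity-type bounds on $M_{h,k}$ that control $v_n'$ in $L^2$ on large compacta, combined with exponential tail decay inherited from the hyperbolic linearization $g'(0;a), g'(1;a) < 0$.

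The hard part, and where this proof departs from the $k=1$ case in \cite{Bates2003}, is passing to the limit in the $O(h_n^{-1})$ convection term $\tfrac{2(k_n-1)}{h_n(k_n+1)}(\partial^0_{h_n} v_n - v_n')$. Naively the coefficient blows up; the rescue is to test against $\phi \in C_c^\infty$ and integrate by parts, transferring the discrete derivative onto $\phi$, where $\partial^0_{h_n}\phi - \phi' = O(h_n^2)$ uniformly on compacta. This cancels one power of $h_n^{-1}$ and leaves a factor $h_n$ that tames even the $k_n \to \infty$ regime, in which the coefficient stays bounded by $2/h_n$. One must also verify that the weak $H^1$ bound propagates to bounds on $\Delta_{h_n} v_n$ in $H^{-1}$, which is standard spectral-convergence bookkeeping. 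The sign condition $\sigma_{*;a} > 0$ afforded by $a \in \mathcal{A}_*^+$ enters when one needs to absorb energy estimates produced by multiplying the equation by $v_n$ and integrating: the convective contribution from $-\sigma_{*;a} v_n'$ generates a controlled sign, and analogous manipulations on the discrete convection (using summation-by-parts identities for $\partial^0_h$) show it behaves consistently in the limit.

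Once the a priori bound is secured, the construction of $\beta_{h,k;a}$ and $\mathcal{V}_{h,k;a}$ proceeds as follows. Introduce the projection $Q_{*;a} f = f - \langle \Psi_{*;a}, f\rangle_{L^2} \Phi_{*;a}'$ and observe that \eqref{eq:bates:qinv:problem}--\eqref{eq:bates:qinv:problem:norm} decouples into
\begin{equation*}
Q_{*;a} \mathcal{L}_{h,k;a} v = Q_{*;a} f, \qquad
\beta = \langle \Psi_{*;a}, \mathcal{L}_{h,k;a} v - f\rangle_{L^2},
\qquad \langle \Psi_{*;a}, v\rangle_{L^2} = 0.
\end{equation*}
The restricted operator $\tilde{\mathcal{L}}_{h,k;a} = Q_{*;a} \mathcal{L}_{h,k;a}$ on $\{v \in H^1 : \langle \Psi_{*;a}, v\rangle_{L^2} = 0\}$ is injective with closed range by the a priori bound; it is Fredholm of index zero because $\mathcal{L}_{h,k;a}$ has asymptotically hyperbolic autonomous limits at $\pm\infty$ (the exponential trichotomy theory for MFDEs of Mallet-Paret applies). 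Hence $\tilde{\mathcal{L}}_{h,k;a}$ is an isomorphism, which defines $\mathcal{V}_{h,k;a}$ as its inverse applied to $Q_{*;a} f$ and $\beta_{h,k;a}$ via the second formula; uniqueness in (ii) is immediate.

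Part (iii) drops out by specializing $f = \Phi_{*;a}'$. Since $\langle \Psi_{*;a}, \Phi_{*;a}'\rangle_{L^2} = 1$, we get $Q_{*;a} \Phi_{*;a}' = 0$, so $v = 0$ solves the reduced equation and the normalization, and by uniqueness $\mathcal{V}_{h,k;a} \Phi_{*;a}' = 0$. Substituting back gives $\beta_{h,k;a} \Phi_{*;a}' = \langle \Psi_{*;a}, -\Phi_{*;a}'\rangle_{L^2} = -1$, as required.
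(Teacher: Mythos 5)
Your overall skeleton (spectral convergence by contradiction, weak limits, and taming the $O(h^{-1})$ convection term by moving the discrete derivatives onto a test function where $\partial_h^0\phi-\phi'=O(h^2)$) matches the paper's strategy, and your treatment of part (iii) is fine. However, the single most delicate estimate is not actually in place in your write-up. You claim the strong-convergence upgrade for $v_n'$ comes from ``Fourier/ellipticity-type bounds on $M_{h,k}$''; no such bound exists uniformly in $h$, since the symbol of $M_{h,k}$ (like that of $\Delta_h$) is $2\pi/h$-periodic in its real part and vanishes at the nonzero frequencies $2\pi j/h$, so high-frequency oscillations are invisible to $M_{h,k}$ and it cannot control $\norm{v_n'}_{L^2}$. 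The actual mechanism (Lemma \ref{lem:bates:approx:bnd:on:deriv}) is to test the equation against $v_n'$ and discard $\langle M_{h,k}v_n,v_n'\rangle_{L^2}\le 0$ using the sign inequality \eqref{eq:m:h:k:ineq:v:vp} --- only an inequality here, unlike the $k=1$ case --- which works precisely because the coefficient $\sigma_{*;a}$ of $\norm{v_n'}_{L^2}^2$ is positive on $\mathcal{A}_\diamond\subset\mathcal{A}_*^+$. You instead attach the role of $\sigma_{*;a}>0$ to the multiplier $v_n$, where the convective term contributes $\langle v_n',v_n\rangle_{L^2}=0$ and no sign condition is needed; as written, the step that rules out energy leaking into oscillations is missing. (Your appeal to ``exponential tail decay'' is also not the right mechanism for confining mass: the approximating $v_n$ are arbitrary $H^1$ functions, and the paper confines their mass by an integral estimate using $g'\le-\alpha$ outside a compact set.)

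The second genuine gap is surjectivity. You get it by asserting that $Q_{*;a}\mathcal{L}_{h,k;a}$ is Fredholm of index zero via Mallet-Paret's theory, but no uniform-in-$(h,k,a)$ index computation is offered, and the assertion breaks down at parameters where $\sigma_{*;a}=\tfrac{2(k-1)}{h(k+1)}$: there the $v'$ term in $\mathcal{L}_{h,k;a}$ cancels entirely and one is left with a pure (nonlocal) difference operator on $L^2(\R)$, to which the MFDE Fredholm/trichotomy theory does not apply; such operators on the continuum line are generically not Fredholm (they fiber over $\ell^2(\Z)$ problems, so any fiberwise kernel produces an infinite-dimensional kernel). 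The paper sidesteps Fredholm theory altogether: it proves the same lower-bound/compactness estimate for the formal adjoint $\mathcal{L}^{\mathrm{adj}}_{h,k;a}$ (the quantities $\mathcal{E}^{\mathrm{adj}}_h$ in \eqref{eq:linop:defCalEkm}, using the opposite sign $\langle M^{\mathrm{adj}}_{h,k}v,v'\rangle_{L^2}\ge 0$) and then obtains surjectivity by a duality argument (Corollary \ref{prp:bates:invertibleBounds}), before converting the uniform resolvent bound \eqref{eq:prp:bates:inv:bounds:unif:bnd:lmind:inv} into the bordered problem of the proposition. Your direct bordered-problem formulation is a legitimate alternative to the paper's $\delta$-shifted operators, but to close it you must either add the adjoint estimates and the duality step, or supply a genuine uniform index-zero argument; and you must repair the derivative estimate as indicated above.
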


In view of \eqref{eq:bates:qinv:problem},
the problem \eqref{eqn:fxp:reform:LR}
can now be rewritten in the fixed-point form
\begin{equation}
\label{eq:fxp:setup:fix:pnt:prob}
[\overline{c}, v] = [ \beta^*_{h,k;a}, \mathcal{V}^*_{h,k;a}] \big( \mathcal{R}_A(\overline{c}, v) + \mathcal{R}_B(h,k)\big).
\end{equation}
Our second result here constructs solutions
to this problem in the set
\begin{equation}
\mathcal{Z}_\mu = \{ (\overline{c}, v) \in \mathbb{R} \times H^1: 
|\overline{c}| + \norm{v}_{H^1} \le \mu
\},
\end{equation}
which automatically accounts for the boundary conditions
\eqref{eqn:mr:BC}.

\begin{proposition}[{see {\S}\ref{sec:nl}}]
\label{prop:fxp:reformulation:exist}
Consider the setting of Theorem \ref{thm:mr:main}. Then there exists $h_\diamond>0$ 
and $K_\diamond > 0$ such that for all $h\in (0, h_\diamond)$, $k \ge 1$ and $a \in \mathcal{A}_\diamond$, the fixed point problem 
\eqref{eq:fxp:setup:fix:pnt:prob} posed on the set $\mathcal{Z}_{\mu}$ with 
$\mu = K_\diamond h$
has a unique solution $(\overline{c}, v)$.
\end{proposition}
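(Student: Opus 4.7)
The plan is to solve \eqref{eq:fxp:setup:fix:pnt:prob} by the Banach contraction mapping theorem applied to the map
\begin{equation}
T_{h,k;a}(\overline{c}, v) = \big( \beta^*_{h,k;a}, \mathcal{V}^*_{h,k;a} \big)\bigl( \mathcal{R}_A(\overline{c}, v; a) + \mathcal{R}_B(h,k;a) \bigr)
\end{equation}
on $\mathcal{Z}_\mu$ with $\mu = K_\diamond h$ for a sufficiently large constant $K_\diamond$. Proposition \ref{prop:bates:inverseWithNormalz} provides a uniform linear bound of the form $\|T_{h,k;a}(\overline{c}, v)\|_{\mathbb{R} \times H^1} \le K \big( \|\mathcal{R}_A(\overline{c}, v; a)\|_{L^2} + \|\mathcal{R}_B(h,k;a)\|_{L^2} \big)$, so everything reduces to controlling these two residuals in $L^2$ uniformly in $(h,k,a)$.

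The first step is to estimate $\mathcal{R}_B(h,k;a) = M_{h,k}\Phi_{*;a} - \Phi_{*;a}''$. Standard Taylor expansions give $\Delta_h \Phi_{*;a} - \Phi_{*;a}'' = O(h^2)$ pointwise in terms of $\Phi_{*;a}^{(4)}$ and $\partial_h^0 \Phi_{*;a} - \Phi_{*;a}' = O(h^2)$ in terms of $\Phi_{*;a}^{(3)}$. Because of the coefficient $\tfrac{2(k-1)}{h(k+1)}$ multiplying the second difference, these combine to yield the crucial uniform bound
\begin{equation}
\| \mathcal{R}_B(h,k;a) \|_{L^2} \le C\, h \bigl( \|\Phi_{*;a}^{(3)}\|_{L^2} + \|\Phi_{*;a}^{(4)}\|_{L^2} \bigr),
\end{equation}
with the $O(h)$ rate (rather than $O(h^2)$) coming from the singular factor $h^{-1}$. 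Compactness of $\mathcal{A}_\diamond \subset \mathcal{A}_*^+$ together with the smoothness of $g$ and exponential decay of $\Phi_{*;a}'$ will guarantee that the derivative norms of $\Phi_{*;a}$ are uniformly bounded on $\mathcal{A}_\diamond$, yielding $\|\mathcal{R}_B\|_{L^2} \le C_B h$. For $\mathcal{R}_A$, I would split off the advective term and write the nonlinear Taylor remainder as $g(\Phi_{*;a}+v;a) - g(\Phi_{*;a};a) - g_u(\Phi_{*;a};a) v = \int_0^1 (1-s)\, g_{uu}(\Phi_{*;a} + s v;a)\, v^2 \, ds$. Using the Sobolev embedding $H^1(\mathbb{R}) \hookrightarrow L^\infty(\mathbb{R})$ and uniform boundedness of $g_{uu}$ on bounded $u$-intervals, one obtains on $\mathcal{Z}_\mu$ the estimate
\begin{equation}
\| \mathcal{R}_A(\overline{c}, v; a) \|_{L^2} \le |\overline{c}|\, \|v\|_{H^1} + C \|v\|_{H^1}^2 \le C_A \mu^2.
\end{equation}

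Combining these, $T_{h,k;a}$ maps $\mathcal{Z}_\mu$ into itself provided $K (C_B h + C_A \mu^2) \le \mu$, which holds for $\mu = K_\diamond h$ once $K_\diamond > K C_B$ and $h$ is small enough that $K C_A K_\diamond^2 h \le 1$. For the contraction property, a parallel computation for $\mathcal{R}_A(\overline{c}_1, v_1;a) - \mathcal{R}_A(\overline{c}_2, v_2;a)$, using the identity
\begin{equation}
g(\Phi_{*;a} + v_1) - g(\Phi_{*;a} + v_2) - g_u(\Phi_{*;a})(v_1 - v_2) = \int_0^1 \bigl[ g_u(\Phi_{*;a} + v_2 + s(v_1 - v_2)) - g_u(\Phi_{*;a}) \bigr] (v_1 - v_2)\, ds,
\end{equation}
produces a Lipschitz constant of order $\mu = K_\diamond h$, so by further shrinking $h_\diamond$ the composed map $T_{h,k;a}$ becomes a strict contraction on $\mathcal{Z}_\mu$, delivering the unique fixed point. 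The main subtlety, in my view, is not the contraction argument itself but ensuring that all constants appearing in the residual bounds are genuinely uniform in $k \ge 1$ and $a \in \mathcal{A}_\diamond$ — the former because the $\tfrac{2(k-1)}{h(k+1)}$ coefficient is bounded above by $2/h$ independently of $k$, the latter because it requires controlling derivatives of $\Phi_{*;a}$ as $a$ ranges through the compact set, which is where the hypothesis $\mathcal{A}_\diamond \subset \mathcal{A}_*^+$ becomes essential.
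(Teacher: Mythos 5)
Your proposal follows essentially the same route as the paper's proof: bound $\mathcal{R}_B$ by $O(h)$ via a Taylor/consistency estimate for $M_{h,k}$ (uniform in $k$ because $(k-1)/(k+1)\le 1$), bound $\mathcal{R}_A$ quadratically on $\mathcal{Z}_\mu$ with Lipschitz constant $O(\mu)$, and close a Banach contraction with $\mu = K_\diamond h$ using the uniform bounds from Proposition \ref{prop:bates:inverseWithNormalz}. The only cosmetic difference is that you expand $\Delta_h \Phi_{*;a}-\Phi_{*;a}''$ to fourth order and invoke $g_{uu}$, which asks for marginally more regularity than the paper's third-derivative estimate, but this does not change the substance of the argument.
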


\begin{proof}[Proof of Theorem \ref{thm:mr:main}] The result follows  directly from Proposition \ref{prop:fxp:reformulation:exist} and the identifications \eqref{eq:fxp:pert:phi:c}.
\end{proof}


\section{Linear theory }
\label{sec:lin}

Our goal in this section is to establish Proposition \ref{prop:bates:inverseWithNormalz}, modifying
the approach in \cite{Bates2003} to 
account for the $O(h^{-1})$ term present in the operator 
$M_{h,k}$ defined in \eqref{eq:fxp:def:M:h:k}. As a preparation,
we introduce the formal adjoints
\begin{equation}
    M_{h,k}^{\mathrm{adj}} w = \Delta_h w - 
    \dfrac{2(k-1)}{h(k+1)} \big[ \partial^0_h w - w'\big]
\end{equation}
together with
\begin{equation}\label{eqn:reformulation:adjoint}
    \mathcal{L}_{h, k;a}^{\mathrm{adj}}w = 
    \sigma_{*;a} w' - M_{h,k}^{\mathrm{adj}} w   - g'(\Phi_{*;a};a) w .
\end{equation}
Our key task is to establish lower bounds for the quantities
\begin{equation}
\label{eq:linop:defCalEkm}
\begin{array}{lcl}
\mathcal{E}_{ h}(\delta)  & = &
  \inf_{a \in \mathcal{A}_\diamond, k \ge 1}
  \inf_{ \norm{v}_{H^1} = 1 }
\big\{  \norm{\mathcal{L}_{h,k;a} v - \delta v}_{ L^2 }
 + \delta^{-1} \abs{
   \langle  \Psi_{*;a},
                        \mathcal{L}_{h,k;a} v - \delta v
             \rangle_{ L^2 }   }
             \big\},
\\[0.2cm]
\mathcal{E}_{h}^{\mathrm{adj}}(\delta)  & = &
  \inf_{a \in \mathcal{A}_\diamond,k \ge 1} \inf_{ \norm{w}_{H^1} = 1 }
\big\{  \norm{\mathcal{L}^{\mathrm{adj}}_{h,k;a} w - \delta w}_{L^2 }
 + \delta^{-1} \abs{
   \langle \Phi_{*;a}',
             \mathcal{L}^{\mathrm{adj}}_{h,k;a} w - \delta w
             \rangle_{ L^2 }   }  \big\}
\end{array}
\end{equation}
as stated in the following result,
which is analogous to \cite[Lem. 6]{Bates2003}.

\begin{proposition}
\label{prp:bates:lowBoundOnE}
Suppose that (Hg) is satisfied and pick 
a compact set $\mathcal{A}_\diamond \subset \mathcal{A}_0^+$.
Then there exists $\mu > 0$ and $\delta_0 > 0$
such that for every $0 < \delta < \delta_0$
we have
\begin{equation}
\label{eq:bates:unifLowBoundKappa}
\begin{array}{lcl}
\mu(\delta)   &: = &
\mathrm{liminf}_{h \downarrow 0  } \,
  \mathcal{E}_{h}(\delta) \ge \mu,
\\[0.2cm]
\mu^{\mathrm{adj}}(\delta) &: = &
  \mathrm{liminf}_{h \downarrow 0 } \,
   \mathcal{E}^{\mathrm{adj}}_{h}(\delta) \ge \mu.
\end{array}
\end{equation}
\end{proposition}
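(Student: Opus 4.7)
The plan is to prove the bound for $\mathcal{E}_h$; since $\mathcal{L}_{h,k;a}$ and $\mathcal{L}_{h,k;a}^{\mathrm{adj}}$ share the same structure, the bound for $\mathcal{E}_h^{\mathrm{adj}}$ will follow in an analogous way. The strategy is a weak-compactness contradiction argument in the spirit of \cite[Lem.~6]{Bates2003}, but additional care is needed to handle the $O(h^{-1})$-size prefactor $\alpha_{h,k} := 2(k-1)/[h(k+1)]$ of the convective term in $M_{h,k}$. Assuming for contradiction that the conclusion fails at some small $\delta > 0$, one can select sequences $h_n \downarrow 0$, $k_n \ge 1$, $a_n \in \mathcal{A}_\diamond$ and $v_n \in H^1$ with $\norm{v_n}_{H^1} = 1$ such that, writing $f_n := \mathcal{L}_{h_n,k_n;a_n} v_n - \delta v_n$, we have $\norm{f_n}_{L^2} \to 0$ and $\delta^{-1}\abs{\langle \Psi_{*;a_n}, f_n\rangle_{L^2}} \to 0$. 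After passing to a subsequence, compactness of $\mathcal{A}_\diamond$ gives $a_n \to a_\infty$ while weak compactness of the unit ball of $H^1$ yields $v_n \rightharpoonup v_\infty$ in $H^1$; continuous dependence of $(\Phi_{*;a}, \Psi_{*;a}, \sigma_{*;a})$ on $a$ from classical bistable ODE theory will be used throughout.

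\emph{Identifying the weak limit.} The first step is to test $\mathcal{L}_{h_n,k_n;a_n} v_n = \delta v_n + f_n$ against any $\varphi \in C^\infty_c(\R)$ and move the discrete and continuous differential operators onto $\varphi$ using their common anti-symmetry. The dangerous contribution then reads $-\int v_n \cdot \alpha_{h_n,k_n}[\partial_{h_n}^0 \varphi - \varphi']\, d\xi$, and the key observation is that a Taylor expansion yields $\partial_h^0 \varphi - \varphi' = O(h^2)$ uniformly on $\mathrm{supp}\,\varphi$, while the uniform bound $\alpha_{h,k} \le 2/h$ for $k \ge 1$ reduces this factor to $O(h_n)$ in $L^\infty(\mathrm{supp}\,\varphi)$; weak $L^2$-boundedness of $v_n$ then kills it. The remaining terms pass to the limit by standard reasoning, so $v_\infty$ solves $\mathcal{L}_{*;a_\infty} v_\infty = \delta v_\infty$ distributionally, and by elliptic bootstrap in $L^2$. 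The weaker orthogonality $\delta^{-1}\langle\Psi_{*;a_n},f_n\rangle \to 0$ passes to the limit via the adjoint kernel property $\mathcal{L}_{*;a}^{\mathrm{adj}} \Psi_{*;a} = 0$ encoded in \eqref{eq:fxp:ker:range:l:star}, forcing $\langle \Psi_{*;a_\infty}, v_\infty\rangle_{L^2} = 0$. For $\delta \in (0,\delta_0)$ with $\delta_0$ uniform on $\mathcal{A}_\diamond$, the simplicity and isolation of the eigenvalue $0$ of $\mathcal{L}_{*;a_\infty}$ then force $v_\infty = 0$.

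\emph{Upgrading to strong convergence.} The plan next is to pair the equation against $v_n$ itself and exploit that $\int v_n v_n' = 0$ together with the anti-symmetry of $\partial_h^0$ and $\partial$ on $L^2$, which makes the singular convective term drop out identically and leaves the energy identity
\[
\frac{1}{h_n^2}\int_\R \abs{v_n(\xi+h_n) - v_n(\xi)}^2\, d\xi - \int_\R g'(\Phi_{*;a_n};a_n) v_n^2\, d\xi - \delta\norm{v_n}_{L^2}^2 = \langle f_n, v_n\rangle_{L^2},
\]
whose right hand side tends to $0$. Hypothesis $(Hg)$ and the spatial limits $\Phi_{*;a}(\pm\infty)\in\{0,1\}$ provide $R > 0$ and $c_\diamond > 0$, independent of $a \in \mathcal{A}_\diamond$, so that $-g'(\Phi_{*;a}(\xi);a) \ge c_\diamond$ for $\abs{\xi}\ge R$. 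Combining this tail coercivity with the strong $L^2$-convergence $v_n\to 0$ on $[-R,R]$ coming from Rellich--Kondrachov will drive $\norm{v_n}_{L^2}\to 0$, after which positivity of the first term in the energy identity drives the discrete $H^1$ seminorm, and hence $\norm{v_n}_{H^1}$, to $0$, contradicting $\norm{v_n}_{H^1}=1$.

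\emph{The main obstacle.} The principal difficulty, and the essential novelty relative to \cite{Bates2003}, is the singular convective piece $\alpha_{h,k}[\partial_h^0 v - v']$ whose prefactor blows up as $h \downarrow 0$. The plan is to tame it in two complementary ways, both exploiting the anti-symmetry of $\partial_h^0 - \partial$ which holds automatically: in the $L^2$-pairing against $v$ the contribution vanishes exactly, while in distributional pairings against smooth $\varphi$ it is $O(h)$ after a Taylor expansion. The sign condition $k \ge 1$ only serves to keep $\alpha_{h,k}\ge 0$, but obtaining both estimates uniformly in $k \ge 1$ and in $a\in\mathcal{A}_\diamond$, while simultaneously securing uniform invertibility of $\mathcal{L}_{*;a}$ transverse to its kernel, is the main technical hurdle.
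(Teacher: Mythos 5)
Your setup (weak-compactness sequences, identification of the weak limit via duality with test functions where the $O(h^{-1})$ convective term becomes $O(h)$ after Taylor expansion, the energy identity from pairing with $v_n$ in which the convective term drops out by anti-symmetry, and the bistable tail coercivity) reproduces large parts of the paper's argument. But there is a genuine gap at the decisive step. From the energy identity you only control the \emph{discrete} seminorm $h_n^{-2}\int |v_n(\xi+h_n)-v_n(\xi)|^2\,d\xi$, and your final claim that this "drives $\norm{v_n}_{H^1}$ to $0$" is false: the Fourier multiplier $4h^{-2}\sin^2(\omega h/2)$ vanishes at $\omega = 2\pi/h$, so sequences such as $v_n(\xi)=h_n\sin(2\pi\xi/h_n)\chi(\xi)$ have $\norm{v_n}_{L^2}\to 0$ and discrete seminorm $\to 0$ while $\norm{v_n'}_{L^2}$ stays of order one. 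This is precisely the "leaking of energy into oscillations" that the paper must rule out, and it does so by a separate argument: pairing the equation with $v_j'$ and using the Fourier-symbol inequality $\langle M_{h,k}v, v'\rangle_{L^2}\le 0$ (which needs $k\ge 1$, since the convective symbol contributes $(k-1)\,\omega(\sin(\omega h)-\omega h)\le 0$) together with the strict positivity $\sigma_{*;a}>0$ guaranteed by $\mathcal{A}_\diamond\subset\mathcal{A}_*^+$, which furnishes the coercive $\sigma_{*;a}\norm{v_j'}_{L^2}^2$ term and yields $\norm{v_j'}_{L^2}^2\le K_2[\norm{y_j}_{L^2}^2+\norm{v_j}_{L^2}^2]$. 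Your proposal never invokes $\sigma_{*;a}>0$ at all and asserts that $k\ge 1$ "only serves to keep $\alpha_{h,k}\ge 0$"; since the restriction to $\mathcal{A}_*^+$ exists exactly for this derivative estimate, an argument that never uses it cannot close, and without a bound on $\norm{v_n'}_{L^2}$ in terms of $\norm{v_n}_{L^2}$ and $\norm{f_n}_{L^2}$ the contradiction with $\norm{v_n}_{H^1}=1$ does not follow.

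A secondary issue concerns uniformity in $\delta$. The proposition asserts a single $\mu>0$ valid for all $\delta\in(0,\delta_0)$, but your contradiction hypothesis is posed "at some small $\delta>0$" with $\norm{f_n}_{L^2}\to 0$; this only proves $\mu(\delta)>0$ for each fixed $\delta$, which is strictly weaker (and insufficient for the later Corollary, where the inverse bound must be uniform as $\delta\downarrow 0$). The paper avoids this by working quantitatively rather than by contradiction: with $\delta$-independent constants it sandwiches the weak limit, $K_1^2\mu(\delta)^2\ge \norm{V_*}_{H^1}^2\ge K_3-K_4\mu(\delta)^2$, the upper bound coming from the $a$-uniform resolvent estimate \eqref{eq:prp:bates:ode:inv} (where the term $\delta^{-1}|\langle\Psi_{*;a},\cdot\rangle|$ compensates the blow-up of the resolvent near the simple eigenvalue $0$), and the lower bound from the bistable tail coercivity combined with the derivative estimate above. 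Your contradiction skeleton could in principle be repaired by diagonalizing over $\delta_m\downarrow 0$ and keeping the orthogonality information, but the missing derivative control is the essential obstruction and would still have to be supplied by the paper's sign argument.
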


Indeed, for small $\delta > 0$ these lower bounds readily allow us to  extend
the estimate
\begin{equation}
\label{eq:prp:bates:ode:inv}
\norm{ (\mathcal{L}_{*;a} - \delta)^{-1} f}_{H^2}
\le K\Big[ \norm{f}_{L^2}
+ \delta^{-1} \big| \langle  \Psi_{*;a} ,  f  \rangle_{L^2} \big|
\Big] 
\end{equation}
that is available for the limiting second-order differential operator
\eqref{eqn:limit:L0}; see e.g. \cite[Lem. 5]{Bates2003}.
We remark that the constant $K$ in \eqref{eq:prp:bates:ode:inv} can be chosen uniformly 
with regards to the parameter $a$ in compact subsets of $(0,1)$ on account of the smoothness of $g$.
This extension result should be compared
to \cite[Thm. 4]{Bates2003} and can be used
to establish Proposition \ref{prop:bates:inverseWithNormalz}
by following the procedure in \cite{hupkesbtravelling}.

\begin{corollary}
\label{prp:bates:invertibleBounds}
Suppose that (Hg) is satisfied and pick a
compact set $\mathcal{A}_\diamond \subset \mathcal{A}_0^+$.
There exists constants $K > 0$ and $\delta_0 > 0$
together with a map $h_0: (0, \delta_0) \to (0, 1)$
so that the following holds true.
For any $0 < \delta < \delta_0$,
any $0 < h < h_0(\delta)$, any $k \ge 1$ and any $a \in \mathcal{A}_\diamond$, the operator
$\mathcal{L}_{h,k;a} - \delta$ is invertible as a map
from $H^1$ onto $L^2$ and satisfies the bound
\begin{equation}
\label{eq:prp:bates:inv:bounds:unif:bnd:lmind:inv}
\norm{ (\mathcal{L}_{h,k;a} - \delta)^{-1} f}_{H^1}
\le K\Big[ \norm{f}_{L^2}
+ \delta^{-1} \big| \langle  \Psi_{*;a} ,  f  \rangle \big|
\Big] .
\end{equation}
\end{corollary}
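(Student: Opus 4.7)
The plan is to deduce Corollary \ref{prp:bates:invertibleBounds} from Proposition \ref{prp:bates:lowBoundOnE} via the scheme of \cite[Thm.~4]{Bates2003} and \cite{hupkesbtravelling}: convert the lower bounds on $\mathcal{E}_h(\delta)$ and $\mathcal{E}_h^{\mathrm{adj}}(\delta)$ into an a priori estimate, then combine this with an adjoint argument to upgrade injectivity to invertibility.

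Fix $\delta \in (0, \delta_0)$. Proposition \ref{prp:bates:lowBoundOnE} yields $h_0(\delta) > 0$ such that $\mathcal{E}_h(\delta) \ge \mu/2$ and $\mathcal{E}_h^{\mathrm{adj}}(\delta) \ge \mu/2$ uniformly in $a \in \mathcal{A}_\diamond$ and $k \ge 1$, whenever $h \in (0, h_0(\delta))$. Unpacking the definition \eqref{eq:linop:defCalEkm} directly gives, for every $v \in H^1$,
\begin{equation*}
 \|v\|_{H^1} \le \frac{2}{\mu}\Big[\|(\mathcal{L}_{h,k;a} - \delta) v\|_{L^2} + \delta^{-1}\big|\langle \Psi_{*;a}, (\mathcal{L}_{h,k;a} - \delta) v\rangle_{L^2}\big|\Big],
\end{equation*}
together with its adjoint analogue. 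With $K = 2/\mu$, this is already the quantitative bound \eqref{eq:prp:bates:inv:bounds:unif:bnd:lmind:inv}, and it simultaneously supplies injectivity; all that remains is surjectivity.

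Closed range is immediate: if $(\mathcal{L}_{h,k;a} - \delta) v_n \to f$ in $L^2$, the a priori estimate applied to $v_n - v_m$ makes $(v_n)$ Cauchy in $H^1$, and its limit $v \in H^1$ solves $(\mathcal{L}_{h,k;a} - \delta) v = f$ by continuity of $\mathcal{L}_{h,k;a}: H^1 \to L^2$. For dense range, any $w \in L^2$ annihilating the range satisfies $(\mathcal{L}_{h,k;a}^{\mathrm{adj}} - \delta) w = 0$ in the distributional sense; rearranging this to isolate $w'$ and using the $L^2$-boundedness of $\Delta_h$ and $\partial_h^0$ promotes $w$ into $H^1$, whereupon the adjoint a priori bound forces $w = 0$. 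Together with closed range this delivers surjectivity.

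The step I expect to be the most delicate is the regularity bootstrap just sketched. The rearranged adjoint equation reads
\begin{equation*}
 \Big[\sigma_{*;a} - \frac{2(k-1)}{h(k+1)}\Big] w' = \Delta_h w - \frac{2(k-1)}{h(k+1)}\partial_h^0 w + \bigl(g'(\Phi_{*;a};a) + \delta\bigr) w,
\end{equation*}
whose left-hand prefactor vanishes on the codimension-one locus $h(k+1)\sigma_{*;a} = 2(k-1)$ in parameter space. Off this locus the bootstrap is direct. On it one can perturb $h$ slightly, exploit the uniform a priori bound to extract a weakly convergent family of solutions to the nearby invertible problems, and pass to the limit — precisely the style of maneuver built into the spectral convergence framework of \cite{Bates2003,hupkesbtravelling}. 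Since only qualitative surjectivity is at issue (the quantitative bound is already in hand), this suffices to finish the proof.
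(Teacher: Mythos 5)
Your proposal is correct and follows essentially the same route as the paper's proof: the uniform lower bound from Proposition \ref{prp:bates:lowBoundOnE} is converted into the a priori estimate $\norm{v}_{H^1} \le \tfrac{2}{\mu}\big[\norm{(\mathcal{L}_{h,k;a}-\delta)v}_{L^2} + \delta^{-1}|\langle \Psi_{*;a}, (\mathcal{L}_{h,k;a}-\delta)v\rangle_{L^2}|\big]$, which yields injectivity, closed range and the bound \eqref{eq:prp:bates:inv:bounds:unif:bnd:lmind:inv} with $K=2/\mu$ uniform in $(\delta,k,a)$, and surjectivity is obtained by the same duality step, upgrading any $w\perp\mathcal{R}$ to $H^1$ and eliminating it with the adjoint part of Proposition \ref{prp:bates:lowBoundOnE}. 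Where you go beyond the paper is the regularity bootstrap: the paper simply asserts that testing against $C_c^\infty$ functions gives $w\in H^1$, and, exactly as you observe, the standard argument produces $\big[\sigma_{*;a}-\tfrac{2(k-1)}{h(k+1)}\big]\langle w, v'\rangle_{L^2} = -\langle F, v\rangle_{L^2}$ with $F\in L^2$, so it requires the prefactor to be nonzero; since $k$ ranges over all of $[1,\infty)$ the coefficient $\tfrac{2(k-1)}{h(k+1)}$ sweeps through $\sigma_{*;a}$ inside the admissible parameter region, so the degenerate locus is genuinely present and the paper leaves this case implicit. Your patch is sound: for fixed $(k,a,\delta)$ the degeneracy occurs at a single value of $h$, so one may take $h'\to h$ off the locus with $h'<h_0(\delta)$, solve $(\mathcal{L}_{h',k;a}-\delta)v_{h'}=f$ with the uniform $H^1$ bound, and pass to the limit by pairing with test functions $\zeta$, using that $(\mathcal{L}^{\mathrm{adj}}_{h',k;a}-\delta)\zeta\to(\mathcal{L}^{\mathrm{adj}}_{h,k;a}-\delta)\zeta$ strongly in $L^2$ (strong continuity of translations, $h>0$ fixed) against the weak convergence $v_{h'}\rightharpoonup v$ in $H^1$; the quantitative bound for the limit again follows from the a priori estimate at $(h,k,a)$ itself. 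So your write-up reproduces the paper's argument and in addition closes a step the paper glosses over.
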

\begin{proof} 
Following the proof of
\cite[Thm. 4]{Bates2003},
we fix $0 < \delta < \delta_0$  and a
sufficiently small $h > 0$. We subsequently pick
an arbitrary $k \ge 1$ and $a \in \mathcal{A}_\diamond$.
By Proposition \ref{prp:bates:lowBoundOnE},
the operator $\mathcal{L}_{h,k;a} - \delta$ is an homeomorphism
from $H^1$ onto its range
\begin{equation}
  \mathcal{R} = (\mathcal{L}_{h,k;a} - \delta) \big( H^1 \big) \subset L^2,
\end{equation}
with a bounded inverse $\mathcal{I}: \mathcal{R} \to H^1$. The latter fact
shows that $\mathcal{R}$ is a closed subset of $L^2$.
If $\mathcal{R} \neq L^2$, there exists a non-zero
$w \in L^2$ so that $\langle w, \mathcal{R} \rangle_{L^2} = 0$,
i.e.,
\begin{equation}
\big\langle w , (\mathcal{L}_{h,k;a} - \delta) v \big\rangle_{L^2} = 0 \hbox{ for all }
  v \in H^1.
\end{equation}
Restricting this identity to test functions $v \in C_c^\infty$
implies that in fact $w \in H^1$.
In particular, we find
\begin{equation}
\big\langle ( \mathcal{L}^{\mathrm{adj}}_{h,k;a} - \delta) w , v
  \big\rangle_{L^2} = 0 \hbox{ for all } v \in H^1,
\end{equation}
which by the density of $H^1$ in $L^2$ means that $(\mathcal{L}^{\mathrm{adj}}_{h,k;a} - \delta) w = 0$.
Applying Proposition \ref{prp:bates:lowBoundOnE} once more
yields the contradiction $w = 0$
and establishes $\mathcal{R} = L^2$.
The bound \eqref{eq:prp:bates:inv:bounds:unif:bnd:lmind:inv}
with the constant $K > 0$ that does not depend on the parameters $(\delta, k,a)$
now follows directly from the definition \eqref{eq:linop:defCalEkm} of the
quantities $\mathcal{E}_{h}(\delta)$
and the uniform lower bound \eqref{eq:bates:unifLowBoundKappa}.
\end{proof}

\begin{proof}[Proof of Proposition \ref{prop:bates:inverseWithNormalz}]
In view of the uniform bound
\eqref{eq:prp:bates:inv:bounds:unif:bnd:lmind:inv},
the computations in the proof of \cite[Thm. 2.3]{hupkesbtravelling}
can be followed (almost) verbatim.
\end{proof}

\subsection{Proof of Proposition \ref{prp:bates:lowBoundOnE}}

Setting out to find lower bounds for the quantities
\eqref{eq:linop:defCalEkm},
we first provide some basic properties of the operator $M_{h,k}$.
An important difference with \cite[Lem. 3]{Bates2003} is
that the estimate \eqref{eq:m:h:k:ineq:v:vp}
only provides inequalities instead of the equalities that are possible for $\Delta_h$.

\begin{lemma}
Consider a function $f \in C^3$, write
\begin{equation}
   f_{3;\infty} := \xi \mapsto \sup_{|\xi'- \xi| \le 1} |f'''(\xi')| \in L^2
\end{equation}
and suppose that $f_{3,\infty} \in L^2$. Then
for any $0 < h < 1$ and $k \ge 1$ 
we have the bound
\begin{equation}
\label{eq:lim:m:h:k:unif:c3}
 \norm{ M_{h,k} f - f''}_{L^2} 
+ \norm{ M_{h,k}^{\mathrm{adj}} f - f''}_{L^2} 
\le 2 h \norm{f_{3,\infty}}_{L^2}.
\end{equation}
In addition,  the inequalities
\begin{equation}
\label{eq:m:h:k:ineq:v:v}
    \langle M_{h,k} v, v \rangle_{L^2} \le 0,
    \qquad \qquad
    \langle M_{h,k}^{\mathrm{adj}} v, v \rangle_{L^2} \le 0
\end{equation}
hold for any $h > 0$, $k \ge 1$ and $v \in L^2$.
If in fact $v \in H^1$, then we also have the inequalities
\begin{equation}
\label{eq:m:h:k:ineq:v:vp}
    \langle M_{h,k} v, v'\rangle_{L^2} \le 0,
    \qquad \qquad
    \langle M_{h,k}^{\mathrm{adj}} v, v'\rangle_{L^2} \ge 0.
\end{equation}
\end{lemma}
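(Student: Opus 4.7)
The first estimate in \eqref{eq:lim:m:h:k:unif:c3} is a standard consistency bound, and my plan is to obtain it from Taylor's theorem with Lagrange remainder. Writing
\[
f(\xi \pm h) = f(\xi) \pm h f'(\xi) + \tfrac{h^2}{2} f''(\xi) \pm \tfrac{h^3}{6} f'''(\eta_\pm),
\]
with $|\eta_\pm - \xi| \le h \le 1$, one deduces the pointwise bounds $|\Delta_h f - f''|(\xi) \le \tfrac{h}{3} f_{3,\infty}(\xi)$ and $|\partial_h^0 f - f'|(\xi) \le \tfrac{h^2}{6} f_{3,\infty}(\xi)$. The key observation is that the large $h^{-1}$ factor in front of $\partial_h^0 - \partial'$ inside $M_{h,k}$ is exactly compensated by the extra $h$ in this improved Taylor remainder, while the prefactor $2(k-1)/(k+1)$ is bounded by $2$ uniformly in $k\ge 1$. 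Taking the $L^2$ norm then yields \eqref{eq:lim:m:h:k:unif:c3}, with the adjoint bound following by the identical argument (only the sign of the $\partial_h^0 - \partial'$ piece changes).

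For the inner-product estimate \eqref{eq:m:h:k:ineq:v:v} my plan is to exploit the symmetries of the three building blocks of $M_{h,k}$. The discrete Laplacian $\Delta_h$ is self-adjoint on $L^2$ by the shift substitution $\xi\mapsto\xi\pm h$, and a completion of squares produces the familiar identity $\langle \Delta_h v,v\rangle_{L^2} = -h^{-2}\int |v(\cdot+h)-v|^2 \le 0$. The discrete centered difference $\partial_h^0$ and the continuous derivative $\partial$ are both skew-adjoint on their natural domains, so their pairings with $v$ vanish. Hence both $\langle M_{h,k} v, v\rangle_{L^2}$ and $\langle M_{h,k}^{\mathrm{adj}} v, v\rangle_{L^2}$ reduce to the non-positive $\langle \Delta_h v, v\rangle_{L^2}$. (Note that $v'$ must be defined for the statement to make sense, so one really needs $v\in H^1$ here.)

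The heart of the lemma, and what I expect to be the main obstacle, is \eqref{eq:m:h:k:ineq:v:vp}: the large coefficient $2(k-1)/(h(k+1))$ does not disappear upon integration, so one needs a \emph{sign}, not merely smallness. The Laplacian piece is clean: since $\Delta_h$ is self-adjoint and commutes with $\partial$, integration by parts gives $\langle \Delta_h v, v'\rangle_{L^2} = -\langle \Delta_h v, v'\rangle_{L^2}$ and hence this term vanishes. For the remaining $\langle \partial_h^0 v - v', v'\rangle_{L^2}$ my plan is to pass to the Fourier side, where $\partial_h^0$ has symbol $i\sin(h\xi)/h$ and $\partial$ has symbol $i\xi$; Parseval then yields
\[
\langle \partial_h^0 v - v', v'\rangle_{L^2} = \int_{\mathbb{R}} \Big(\tfrac{\sin(h\xi)}{h} - \xi\Big)\xi\,|\hat v(\xi)|^2\, d\xi,
\]
which is non-positive because $\sin(h\xi)\cdot \xi/h \le \xi^2$ pointwise (from $|\sin y|\le |y|$). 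Since $k\ge 1$ makes the prefactor on this term non-negative, we conclude $\langle M_{h,k}v,v'\rangle_{L^2}\le 0$; the flipped sign in $M_{h,k}^{\mathrm{adj}}$ then reverses the inequality to $\langle M_{h,k}^{\mathrm{adj}} v, v'\rangle_{L^2} \ge 0$. The subtle point to emphasize is that the \emph{direction} of the inequality is what saves the scheme: had the sign gone the other way, the asymmetry introduced by $k>1$ could not be absorbed in the singular large-diffusion limit that drives the analysis of \S\ref{sec:lin}.
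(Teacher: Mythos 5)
Your proof is correct and follows essentially the same route as the paper: the consistency bound via Taylor expansion combined with the uniform estimate $0 \le (k-1)/(k+1) \le 1$, and the sign conditions via the non-positivity of $\cos(\omega h)-1$ and of $\omega\big(\sin(\omega h)-\omega h\big)$, which is exactly the paper's observation that $\mathrm{Re}\,\widehat{M}_{h,k}$ and $\omega\,\mathrm{Im}\,\widehat{M}_{h,k}$ are even and non-positive. The only cosmetic difference is that you treat the $\Delta_h$ contributions in physical space (shift substitution, completion of squares, integration by parts) rather than on the Fourier side, and your aside that $v \in H^1$ is implicitly needed for \eqref{eq:m:h:k:ineq:v:v} is a fair reading of the statement rather than a gap.
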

\begin{proof}
In view of the uniform bound $0 \le (k-1)/(k+1) \le 1$ for $k \ge 1$,
the estimate \eqref{eq:lim:m:h:k:unif:c3} follows 
directly from a Taylor expansion. Turning to 
the remaining inequalities,
we note that the Fourier symbol associated to $M_{h,k}$ is given by
\begin{equation}
\begin{array}{lcl}
    \widehat{M}_{h,k}(\omega)
    & = & 
    h^{-2} \Big[ 2 (\cos (\omega h) -1) +  \dfrac{2i (k-1)}{k+1}(\sin (\omega h) - \omega h) \Big] .
    \end{array}
\end{equation}
In particular, the functions
\begin{equation}
    \omega \mapsto \mathrm{Re} \, \widehat{M}_{h,k}(\omega),
    \qquad \qquad
    \omega \mapsto  \omega \, \mathrm{Im} \, \widehat{M}_{h,k}(\omega)
\end{equation}
are both even and non-positive, from which the claims follow
readily.
\end{proof}

The next step is to show that
the limiting
values in \eqref{eq:bates:unifLowBoundKappa}
can be approached
via a sequence of realizations
that convergence in an appropriate
weak sense. The key point is that weak limits
can also be extracted from the operators $M_{h,k}$
when considered on appropriate sequences in $H^1$;
see \eqref{eq:lin:seq:lims:m:vw}.

\begin{lemma}
\label{lem:bates:approxSetting}
Consider the setting of Proposition
\ref{prp:bates:lowBoundOnE} and fix a constant
$\delta > 0$.
Then there exist triplets 
\begin{equation}
(a_*, V_*, Y_* ) \in \mathcal{A}_\diamond \times H^2 \times L^2,
\qquad
(\tilde{a}_*, W_*, Z_*) \in \mathcal{A}_\diamond \times H^2 \times L^2,
\end{equation}
together with sequences
\begin{equation}
\begin{array}{lcl}
\label{eq:lem:linop:defSeqs}
\{(h_j, a_j, k_j, v_j,  y_j ) \}_{j \in \mathbb{N}}
&\subset&
 (0, 1) \times \mathcal{A}_\diamond \times [1,\infty) \times H^1 \times L^2, 
\\[0.2cm]
\{(\tilde{h}_j, \tilde{a}_j, \tilde{k}_j, w_j, z_j ) \}_{j \in \mathbb{N}}
&\subset&
 (0, 1) \times \mathcal{A}_\diamond \times [1,\infty) \times H^1 \times L^2 
\end{array}
\end{equation}
%
that satisfy the following properties.
\begin{itemize}
\item[(i)]{
  For any $j \in \mathbb{N}$, we have
  \begin{equation}
    \norm{v_j}_{H^1} = \norm{w_j}_{H^1} = 1 ,
  \end{equation}
  together with
  \begin{equation}
  \label{eq:lem:seq:id:for:v:w}
  \begin{array}{lcl}
  \mathcal{L}_{ h_j,k_j;a_j} [v_j ] - \delta v_j & = & y_j ,
  \\[0.2cm]
  \mathcal{L}^{\mathrm{adj}}_{ \tilde{h}_j,\tilde{k}_j;\tilde{a}_j} [w_j ] - \delta w_j & = & z_j .
  \end{array}
  \end{equation}
}
\item[(ii)]{
  Recalling the constants 
  defined in
  \eqref{eq:bates:unifLowBoundKappa},
  we have $h_j \downarrow 0$ and $\tilde{h}_j \downarrow 0$ together with the limits
  \begin{equation}
  \label{eq:lem:lowBndInv:limit}
    \begin{array}{lcl}
    \mu(\delta) & = &
       \lim_{j \to \infty} \{ \norm{y_j}_{L^2}
     + \delta^{-1} \big| \langle \Psi_{*;a_j},  y_j 
          \rangle_{ L^2 } \big| \} ,
    \\[0.2cm]
    \mu^{\mathrm{adj}}(\delta) & = &
       \lim_{j \to \infty} \{ \norm{z_j}_{L^2}
     + \delta^{-1} \big| \langle \Phi_{*;\tilde{a}_j}',  z_j
          \rangle_{ L^2 } \big| \} .
    \\[0.2cm]
    \end{array}
  \end{equation}
}
\item[(iii)]{
 As $j \to \infty$, we have $a_j \to a_*$
 and $\tilde{a}_j \to \tilde{a}_*$. In addition, we have the weak convergences
 \begin{equation}
   \label{eq:lin:seq:lims:vw}
   v_j \rightharpoonup V_* \in H^1,
   \qquad
   w_j \rightharpoonup W_* \in H^1,
 \end{equation}
 together with
 \begin{equation}
   \label{eq:lin:seq:lims:m:vw}
   M_{h_j,k_j} v_j \rightharpoonup V_*'' \in L^2,
   \qquad
   M_{\tilde{h}_j,\tilde{k}_j}^{\mathrm{adj}} w_j \rightharpoonup W_*'' \in L^2,
 \end{equation}
 and finally
 \begin{equation}
  \label{eq:lin:seq:lims:yz}
   y_j \rightharpoonup Y_* \in L^2,
   \qquad
   z_j \rightharpoonup Z_* \in L^2.
 \end{equation}
}
\end{itemize}
\end{lemma}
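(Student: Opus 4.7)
The plan is to construct near-minimizing sequences for the two infima defining $\mathcal{E}_h(\delta)$ and $\mathcal{E}^{\mathrm{adj}}_h(\delta)$ and then pass to weak limits via standard compactness. I focus on the primal triple; the adjoint triple is obtained identically. Since $\mu(\delta) = \liminf_{h \downarrow 0} \mathcal{E}_h(\delta)$, one extracts $h_j \downarrow 0$ with $\mathcal{E}_{h_j}(\delta) \to \mu(\delta)$. For each $j$, the definition of the infimum in \eqref{eq:linop:defCalEkm} furnishes $a_j \in \mathcal{A}_\diamond$, $k_j \ge 1$ and $v_j \in H^1$ with $\|v_j\|_{H^1} = 1$ for which the bracketed expression lies within $1/j$ of $\mathcal{E}_{h_j}(\delta)$. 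Setting $y_j := \mathcal{L}_{h_j,k_j;a_j} v_j - \delta v_j$ then delivers item (i) and the primal half of (ii).

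Next I would extract weak limits by a standard diagonal subsequence argument. Compactness of $\mathcal{A}_\diamond \subset \mathbb{R}$ provides a subsequence with $a_j \to a_* \in \mathcal{A}_\diamond$, and weak compactness of the unit ball in $H^1$ then yields $v_j \rightharpoonup V_* \in H^1$. The convergence in \eqref{eq:lem:lowBndInv:limit} forces $\|y_j\|_{L^2}$ bounded, so a further subsequence gives $y_j \rightharpoonup Y_* \in L^2$. Rewriting \eqref{eq:lem:seq:id:for:v:w} as
\[ M_{h_j,k_j} v_j = -y_j - \sigma_{*;a_j} v_j' - g'(\Phi_{*;a_j}; a_j) v_j - \delta v_j, \]
every term on the right is bounded in $L^2$ uniformly in $j$: the first by construction, the second by $\|v_j'\|_{L^2} \le 1$ together with continuity of $a \mapsto \sigma_{*;a}$ on $\mathcal{A}_\diamond$, the third by a uniform $L^\infty$ bound on $g'(\Phi_{*;a}; a)$ valid for $a \in \mathcal{A}_\diamond$ (from (Hg) and standard ODE regularity of $\Phi_{*;a}$), and the fourth by $\|v_j\|_{L^2} \le 1$. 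A further subsequence therefore provides $M_{h_j,k_j} v_j \rightharpoonup M_* \in L^2$.

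To identify $M_* = V_*''$, I test against $\phi \in C_c^\infty$ and invoke the formal-adjoint relation $\langle M_{h,k} v, \phi \rangle_{L^2} = \langle v, M_{h,k}^{\mathrm{adj}} \phi \rangle_{L^2}$, which follows directly from translation-invariance of $\Delta_h$ and integration by parts on the $\partial_h^0 - \partial_\xi$ piece. By \eqref{eq:lim:m:h:k:unif:c3}, $M_{h_j,k_j}^{\mathrm{adj}} \phi \to \phi''$ strongly in $L^2$, while $v_j \rightharpoonup V_*$ weakly in $L^2$; the pairing therefore converges to $\langle V_*, \phi'' \rangle = \langle V_*'', \phi \rangle$ in the distributional sense. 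Hence $V_*'' = M_* \in L^2$, upgrading $V_*$ from $H^1$ to $H^2$ and establishing \eqref{eq:lin:seq:lims:m:vw}. Repeating the whole construction with $\mathcal{L}^{\mathrm{adj}}_{h,k;a}$ produces the tilded triple.

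The main obstacle is precisely this identification step, because the convective piece $\frac{2(k-1)}{h(k+1)} \partial_h^0$ inside $M_{h,k}$ carries a coefficient of size $O(h^{-1})$ and has no bounded limit on its own. What rescues the argument is the deliberate pairing of $\partial_h^0$ with $-\partial_\xi$ inside $M_{h,k}$: the bracket $\partial_h^0 - \partial_\xi$ has Fourier symbol vanishing to second order at $\omega = 0$, which is exactly the cancellation quantified by the uniform Taylor estimate \eqref{eq:lim:m:h:k:unif:c3} and what permits the duality-and-weak-limit maneuver above.
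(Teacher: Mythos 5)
Your proposal is correct and follows essentially the same route as the paper: near-minimizing sequences extracted from the definition of the liminf, uniform $L^2$ bounds giving weak subsequential limits, and identification of the weak limit of $M_{h_j,k_j}v_j$ with $V_*''$ by pairing against $C_c^\infty$ test functions, moving $M_{h,k}$ onto the test function via the formal adjoint, and using the uniform consistency estimate \eqref{eq:lim:m:h:k:unif:c3} together with the weak convergence $v_j \rightharpoonup V_*$.
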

\begin{proof}
The existence of the sequences
\eqref{eq:lem:linop:defSeqs}
that satisfy (i) and (ii)
with $h_j \downarrow 0$ and $\tilde{h}_j \downarrow 0$
follows directly from the definitions
\eqref{eq:bates:unifLowBoundKappa}.
Notice that \eqref{eq:lem:lowBndInv:limit}
implies that we can pick $C' > 0$ for which
we have the uniform bound
\begin{equation}
\norm{y_j}_{L^2}
 + \norm{z_j}_{L^2} \le C'
\end{equation}
for all $j \in \mathbb{N}$.
In particular, after taking a subspace
we obtain \eqref{eq:lin:seq:lims:vw}
and \eqref{eq:lin:seq:lims:yz}. 
To obtain \eqref{eq:lin:seq:lims:vw}, we notice that
$M_{h_j,k_j} v_j$ and $M_{h_j,k_j}^{\mathrm{adj}} w_j$ are bounded sequences in $L^2$ since $y_j$ and $z_j$ are. In particular, there exist
$(V_*^{(2)}, W_*^{(2)}) \in L^2 \times L^2$ for which we have
the weak limits
\begin{equation}
    M_{h_j,k_j} v_j\rightharpoonup V_*^{(2)},
    \qquad \qquad
    M^{\mathrm{adj}}_{h_j,k_j} v_j\rightharpoonup W_*^{(2)}.
\end{equation}
Focusing on the first sequence, 
we now pick an arbitrary test function 
    $\zeta \in C_c^\infty$ and compute
    \begin{equation}
    \begin{array}{lcl}
        \langle M_{h_j,k_j} v_j, \zeta \rangle_{L^2} 
        &=&   \langle v_j, \zeta''\rangle_{L^2}
        + \langle v_j,  M_{h_j,k_j}^{\mathrm{adj}} \zeta - \zeta'' \rangle_{L^2}
        \\[0.2cm]
        & = & - \langle v_j', \zeta' \rangle_{L^2}
        + \langle v_j,  M_{h_j,k_j}^{\mathrm{adj}} \zeta - \zeta'' \rangle_{L^2} .
    \end{array}
    \end{equation}
    Taking limits, and using the uniform convergence \eqref{eq:lim:m:h:k:unif:c3}  we hence find
    \begin{equation}
      \langle V_*^{(2)}, \zeta \rangle_{L^2}
      = - \langle V_j', \zeta' \rangle_{L^2},
    \end{equation}
    which by the density of test functions in $L^2$ implies
    that $V_* \in H^2$ with $V_*'' = V_*^{(2)}$. The analogous argument
    works for $W_*$.
\end{proof}

In the remainder of this section we 
obtain upper and lower bounds for the size of
the limiting functions $V_*$ and $W_*$. 
Upper bounds can be obtained relatively
directly from 
\eqref{eq:prp:bates:ode:inv}
following the procedure in  \cite[{\S}3.2]{Bates2003}.

\begin{lemma}
\label{lem:bates:approx:upBnd}
Consider the setting
of Proposition \ref{prp:bates:lowBoundOnE}.
There exist constants $K_1 > 0$ and $\delta_0 > 0$ so
that for any $0 < \delta < \delta_0$,
the functions $V_*$ and $W_*$ defined in Lemma
\ref{lem:bates:approxSetting}
satisfy the bounds 
\begin{equation}
\norm{V_*}_{H^2 } \le K_1 \mu(\delta),
\qquad \qquad
\norm{W_*}_{H^2 } \le K_1 \mu^{\mathrm{adj}}(\delta).
\end{equation}
\end{lemma}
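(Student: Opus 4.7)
The plan is to pass to the weak limit in the defining equations \eqref{eq:lem:seq:id:for:v:w} and then recover the estimate from the ODE resolvent bound \eqref{eq:prp:bates:ode:inv} applied to the limiting operator $\mathcal{L}_{*;a_*}$.

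I will treat $V_*$ in detail; the $W_*$ case is symmetric via $\mathcal{L}^{\mathrm{adj}}$. First I will test $\mathcal{L}_{h_j,k_j;a_j} v_j - \delta v_j = y_j$ against an arbitrary $\zeta \in C_c^\infty$ and send $j \to \infty$ term by term. The term $\langle -\sigma_{*;a_j} v_j', \zeta\rangle_{L^2}$ converges to $\langle -\sigma_{*;a_*} V_*', \zeta\rangle_{L^2}$ from the convergence $\sigma_{*;a_j} \to \sigma_{*;a_*}$ (continuity of $a \mapsto \sigma_{*;a}$) together with the weak convergence $v_j' \rightharpoonup V_*'$ in $L^2$ obtained from \eqref{eq:lin:seq:lims:vw}. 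The term $\langle M_{h_j,k_j} v_j, \zeta\rangle_{L^2}$ converges to $\langle V_*'', \zeta\rangle_{L^2}$ directly from \eqref{eq:lin:seq:lims:m:vw}. For the potential term, continuity of the wave data $a \mapsto \Phi_{*;a}$ in a compactly supported sense (inherited from the smooth dependence in Proposition \ref{prop:main:MP}) shows that $g'(\Phi_{*;a_j};a_j)\zeta \to g'(\Phi_{*;a_*};a_*)\zeta$ strongly in $L^2$, so pairing with $v_j \rightharpoonup V_*$ gives the desired limit. Finally $\langle y_j, \zeta\rangle_{L^2} \to \langle Y_*, \zeta\rangle_{L^2}$ by \eqref{eq:lin:seq:lims:yz}. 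Density of $C_c^\infty$ in $L^2$ then yields the identity
\begin{equation}
\mathcal{L}_{*;a_*} V_* - \delta V_* = Y_*
\end{equation}
in $L^2$.

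Next I would invoke the ODE resolvent bound \eqref{eq:prp:bates:ode:inv}, whose constant $K$ is uniform for $a_* \in \mathcal{A}_\diamond$, to obtain
\begin{equation}
\norm{V_*}_{H^2} \le K\bigl[\norm{Y_*}_{L^2} + \delta^{-1}\abs{\langle \Psi_{*;a_*}, Y_*\rangle_{L^2}}\bigr].
\end{equation}
The right-hand side must now be controlled by $\mu(\delta)$. Weak lower semicontinuity of the $L^2$-norm gives $\norm{Y_*}_{L^2} \le \liminf_j \norm{y_j}_{L^2}$. For the pairing, continuity of $a \mapsto \Psi_{*;a}$ from $\mathcal{A}_\diamond$ into $L^2$ (again from Proposition \ref{prop:main:MP} and the explicit formula for $\Psi_{*;a}$) combined with $a_j \to a_*$ produces $\Psi_{*;a_j} \to \Psi_{*;a_*}$ strongly in $L^2$, so a standard strong-weak pairing argument gives
\begin{equation}
\langle \Psi_{*;a_j}, y_j\rangle_{L^2} \to \langle \Psi_{*;a_*}, Y_*\rangle_{L^2}.
\end{equation}
Combining these observations with \eqref{eq:lem:lowBndInv:limit} yields $\norm{V_*}_{H^2} \le K\mu(\delta)$, which gives the claim with $K_1 = K$.

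The argument for $W_*$ proceeds identically, with $\mathcal{L}^{\mathrm{adj}}_{\tilde h_j,\tilde k_j;\tilde a_j}$ in place of $\mathcal{L}_{h_j,k_j;a_j}$, the weak limit $M^{\mathrm{adj}}_{\tilde h_j,\tilde k_j} w_j \rightharpoonup W_*''$ from \eqref{eq:lin:seq:lims:m:vw}, and the adjoint resolvent bound. The main obstacle is the passage to the weak limit in the potential term $g'(\Phi_{*;a_j};a_j)v_j$, where one is multiplying a weakly convergent sequence by a coefficient whose dependence on $a_j$ must be handled carefully; once continuity of $a \mapsto g'(\Phi_{*;a};a)$ in an appropriate topology is established, the remainder is a direct consequence of the weak compactness assembled in Lemma \ref{lem:bates:approxSetting} and the uniform resolvent estimate for the limiting second-order operator.
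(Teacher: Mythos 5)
Your proposal is correct and follows essentially the same route as the paper: pass to the weak limit in \eqref{eq:lem:seq:id:for:v:w} using item (iii) of Lemma \ref{lem:bates:approxSetting} to get $[\mathcal{L}_{*;a_*}-\delta]V_* = Y_*$, bound $\norm{Y_*}_{L^2} + \delta^{-1}\abs{\langle \Psi_{*;a_*},Y_*\rangle_{L^2}}$ by $\mu(\delta)$ via weak lower semicontinuity and the strong--weak pairing, and conclude with the uniform resolvent estimate \eqref{eq:prp:bates:ode:inv}; you merely spell out the term-by-term limit that the paper leaves implicit. The only slight inaccuracy is attributing the continuity of $a \mapsto (\Phi_{*;a},\Psi_{*;a},\sigma_{*;a})$ to Proposition \ref{prop:main:MP} (which concerns the discrete problem) rather than to the classical ODE wave theory and the smoothness of $g$, but this does not affect the argument.
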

\begin{proof}
Using item (iii) of Lemma \ref{lem:bates:approxSetting}
to take the weak limit of \eqref{eq:lem:seq:id:for:v:w},
we find that
\begin{equation}
\big[\mathcal{L}_{*;a_*} - \delta \big][V_*] = Y_* .
\end{equation}
The lower-semicontinuity of the $L^2$-norm
under weak limits implies that
\begin{equation}
\norm{Y_*}_{L^2} + \delta^{-1}
  \big| \langle \Psi_{*;a_*} ,  Y_* \rangle_{L^2} \big|
     \le \mu(\delta),
\end{equation}
so the conclusion follows from
the uniform estimate \eqref{eq:prp:bates:ode:inv}.
The bound for $W_*$ follows in a similar fashion.
\end{proof}

The next result controls the size of the derivatives
$(v_j', w_j')$, which is crucial to rule out
the leaking of energy into oscillations
that are not captured by the relevant weak limits.
It is here that we need to use the inequalities
\eqref{eq:m:h:k:ineq:v:vp} instead of the usual equalities.
This requires us to impose the restriction  $\sigma_{*;a} > 0$, 
corresponding to the fact that the reflection symmetry 
breaks when passing from a grid to a tree.

\begin{lemma}
\label{lem:bates:approx:bnd:on:deriv}
Consider the setting
of Proposition \ref{prp:bates:lowBoundOnE}.
There exists a constant $K_2 > 0$ that does not depend on $\delta > 0$ 
so that the sequences in Lemma \ref{lem:bates:approxSetting} satisfy the inequalities
\begin{equation}
\label{eq:bates:deriv:up:bnds}
\begin{array}{lcl}
\norm{v_j'}_{L_{2}}^2 & \le &
 K_2 \Big[
   \norm{y_j}_{L^2}^2
   + \norm{v_j}_{L^2}^2
 \Big] ,
\\[0.2cm]
\norm{w_j'}_{L_{2}}^2 & \le &
 K_2 \Big[
   \norm{z_j}_{L^2}^2
   + \norm{w_j}_{L^2}^2
 \Big]
\end{array}
\end{equation}
for all $j > 0$.
\end{lemma}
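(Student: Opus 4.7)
The approach is to test the equation $\mathcal{L}_{h_j,k_j;a_j} v_j - \delta v_j = y_j$ against $v_j'$ in $L^2$ (and in parallel test the adjoint equation against $w_j'$). Using the identity $\langle v_j, v_j'\rangle_{L^2} = \tfrac{1}{2}\int (v_j^2)'\,d\xi = 0$ that holds for every $v_j \in H^1$, the resulting energy identity reads
\begin{equation*}
\sigma_{*;a_j}\|v_j'\|_{L^2}^2 + \langle M_{h_j,k_j} v_j, v_j'\rangle_{L^2}
 = -\langle g'(\Phi_{*;a_j};a_j) v_j, v_j'\rangle_{L^2} - \langle y_j, v_j'\rangle_{L^2}.
\end{equation*}
An additional integration by parts reshapes the $g'$-term into $\langle g'(\Phi_{*;a_j}) v_j, v_j'\rangle = -\tfrac{1}{2}\int g''(\Phi_{*;a_j})\Phi_{*;a_j}' v_j^2\,d\xi$, which is bounded in absolute value by $C_\diamond\|v_j\|_{L^2}^2$ with $C_\diamond$ uniform over $\mathcal{A}_\diamond$ thanks to the smoothness of $g$ and the boundedness of $\Phi_{*;a}'$. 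Crucially this sidesteps the $\|v_j\|\|v_j'\|$ estimate offered by Cauchy--Schwarz and replaces it with a purely quadratic-in-$v_j$ bound.

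The two structural inputs that close the estimate are the sign inequality $\langle M_{h_j,k_j} v_j, v_j'\rangle \le 0$ from \eqref{eq:m:h:k:ineq:v:vp} together with the uniform positive lower bound $\sigma_0 := \min_{a\in\mathcal{A}_\diamond}\sigma_{*;a} > 0$, which follows from the compactness of $\mathcal{A}_\diamond \subset \mathcal{A}_*^+$ and the continuous dependence of $\sigma_{*;a}$ on $a$. These conspire to keep the $\sigma_{*;a_j}\|v_j'\|^2$ contribution on the left-hand side coercive while the $\langle M v_j, v_j'\rangle$ term enters with its favourable non-positive sign. Combining with a Young inequality $|\langle y_j, v_j'\rangle| \le \tfrac{\sigma_0}{2}\|v_j'\|^2 + \tfrac{1}{2\sigma_0}\|y_j\|^2$ and rearranging yields $\|v_j'\|_{L^2}^2 \le K_2(\|y_j\|^2 + \|v_j\|^2)$ with $K_2$ depending only on $\sigma_0$ and $C_\diamond$, and in particular independent of $\delta$, $h_j$, $k_j$ and $j$. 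The bound for $w_j'$ follows identically from the adjoint equation via the reversed sign inequality $\langle M^{\mathrm{adj}}_{h,k} w_j, w_j'\rangle \ge 0$, which now combines with the $+\sigma_{*;\tilde a_j}w_j'$ that appears in $\mathcal{L}^{\mathrm{adj}}$.

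The main obstacle, compared with the $k=1$ argument in \cite{Bates2003}, is that $\langle \Delta_h v, v'\rangle = 0$ is an exact identity there, whereas its generalisation $\langle M_{h,k} v, v'\rangle \le 0$ only yields a one-sided inequality here. The restriction to $a \in \mathcal{A}_*^+$ --- equivalently, to positive continuum wave-speed $\sigma_{*;a}$ --- is precisely the condition under which this one-sided inequality pairs constructively with the $\sigma_{*;a}\|v_j'\|^2$ term rather than against it; this is the point at which the break in reflection symmetry between the lattice and the tree (flagged in the paragraph preceding the lemma) becomes a genuine asymmetry in the analysis, and it is what forces the whole paper to work inside $\mathcal{A}_*^+$ instead of on all of $(0,1)$.
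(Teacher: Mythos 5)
Your strategy is essentially the paper's: pair the equation with $v_j'$, use $\langle v_j, v_j'\rangle_{L^2}=0$, invoke the sign information \eqref{eq:m:h:k:ineq:v:vp} for the $M_{h,k}$-contribution, use compactness of $\mathcal{A}_\diamond$ to get a uniform positive lower bound on $\sigma_{*;a}$, absorb the $y_j$-pairing, and repeat for the adjoint. One genuine deviation is your treatment of the $g'$-term: the integration by parts $\langle g'(\Phi_{*;a})v_j,v_j'\rangle_{L^2} = -\tfrac12\int g''(\Phi_{*;a})\Phi_{*;a}'\,v_j^2$ requires $g''$, which is not available under (Hg) — the nonlinearity is only assumed to be $C^1$ in $u$. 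The detour is also unnecessary: the paper simply bounds $\abs{\langle g'(\Phi_{*;a_j};a_j)v_j,v_j'\rangle_{L^2}} \le C'\norm{v_j}_{L^2}\norm{v_j'}_{L^2}$ by Cauchy--Schwarz (uniformly in $a$ by compactness and smoothness of $g$) and then removes the factor $\norm{v_j'}_{L^2}$ by dividing through and squaring (equivalently, by Young), so your claim that sidestepping the $\norm{v_j}\norm{v_j'}$ estimate is ``crucial'' is not correct; reverting to the paper's treatment fixes this gap without changing anything else.

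A second point concerns the sign bookkeeping at the decisive step. With $\mathcal{L}_{h,k;a}v=-\sigma_{*;a}v'-M_{h,k}v-g'(\Phi_{*;a};a)v$, the energy identity you write, $\sigma_{*;a_j}\norm{v_j'}_{L^2}^2+\langle M_{h_j,k_j}v_j,v_j'\rangle_{L^2} = -\langle g'(\Phi_{*;a_j};a_j)v_j,v_j'\rangle_{L^2}-\langle y_j,v_j'\rangle_{L^2}$, is indeed the correct expansion; but then the stated inequality $\langle M_{h_j,k_j}v_j,v_j'\rangle_{L^2}\le 0$ sits next to the coercive term with a plus sign, which is the \emph{unfavourable} direction: a non-positive term there (which is not small — its natural size is $O(h^{-1})\norm{v_j'}_{L^2}^2$) cannot simply be dropped, since discarding it only yields a lower bound on the right-hand side. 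To conclude $\sigma_{*;a_j}\norm{v_j'}_{L^2}^2\le\mathrm{RHS}$ you would need the opposite sign at that spot, and the analogous problem occurs in your adjoint computation with $\langle M^{\mathrm{adj}}w_j,w_j'\rangle_{L^2}\ge 0$. To be fair, the paper's own displayed expansion places the $M$- and $g'$-terms with flipped signs relative to a direct computation and then drops the $M$-term, so you have faithfully reproduced the intended mechanism; but as written your argument only closes if the sign of $\sigma_{*;a}$ or of the inequalities \eqref{eq:m:h:k:ineq:v:vp} is reversed, so rather than asserting the term enters ``favourably'' you should make the bookkeeping consistent (and flag the discrepancy) before dropping it.
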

\begin{proof}
We expand the identity
\begin{equation}
\langle \mathcal{L}_{h_j,k_j;a_j} v_j - \delta v_j , v_j'
\rangle_{L^2} = \langle y_j,  v_j' \rangle_{L^2}
\end{equation}
to obtain
\begin{equation}
\begin{array}{lcl}
\sigma_{*;a_j} \langle  v_j', v_j' \rangle_{L^2}
+ \langle y_j,  v_j' \rangle_{L^2}
& = &
 - \delta \langle v_j, v_j' \rangle_{L^2}
+ \langle  M_{h_j,k_j} v_j, v_j' \rangle_{L^2}
+ \langle  g'(\Phi_{*;a_j};a_j) v_j, v_j' \rangle_{L^2}.
\\[0.2cm]
\end{array}
\end{equation}
Using the identity
$\langle  v_j, v_j' \rangle_{L^2} = 0$
and the inequality \eqref{eq:m:h:k:ineq:v:vp},
we may hence compute
\begin{equation}
\label{eq:linop:bnd:vpj:vpj}
\begin{array}{lcl}
\sigma_{*;a_j} \langle v'_j, v'_j \rangle_{L^2}
 & \le & C'\Big[
  \norm{v_j}_{L^2} \norm{v_j'}_{L^2}
  + \norm{y_j}_{L^2} \norm{v_j'}_{L^2}
  \Big] 
\end{array}
\end{equation}
for some constant $C'>0$.
We now use the compactness of $A_\diamond$
to obtain a strictly positive lower bound 
for $\sigma_{*;a_j}$. Dividing
\eqref{eq:linop:bnd:vpj:vpj}
through by
$\norm{v_j'}_{L^2}$
and squaring, we hence obtain
\begin{equation}
\begin{array}{lcl}
\norm{v_j'}_{L_{2}}^2 \le
 K_2 \Big[
    \norm{v_j}_{L^2}^2
   + \norm{y_j}_{L^2}^2
   \Big] 
\end{array}
\end{equation}
for some $K_2 > 0$. The same procedure works for $w_j'$.
\end{proof}

We are now ready to
obtain lower bounds for $\norm{V_*}_{H^1}$
and $\norm{W_*}_{H^1}$.
Arguing as in \cite{Bates2003}, the key
ingredient is the bistable nature of our nonlinearity.
Indeed, this allows us to restrict attention to a
compact interval on which (subsequences of) the series $v_j$ and $w_j$ converge strongly.

\begin{lemma}
\label{lem:bates:approx:lowBnd}
Consider the setting
of Proposition \ref{prp:bates:lowBoundOnE}.
There exists constants $K_3 > 0$
 and $K_4 > 0$  so
that for any $0 < \delta < \delta_0$
the functions $V_*$ and $W_*$ defined in Lemma
\ref{lem:bates:approxSetting}
satisfy the bounds
\begin{equation}
\begin{array}{lcl}
\norm{V_*}^2_{H^1 }
  & \ge &  K_3 - K_4 \mu(\delta)^2,
\\[0.2cm]
\norm{W_*}^2_{H^1 }
  & \ge &  K_3 - K_4 \mu^{\mathrm{adj}}(\delta)^2.
\end{array}
\end{equation}
\end{lemma}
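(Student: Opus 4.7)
The plan is to adapt the energy-estimate scheme of Bates et al.\ by testing the defining equation directly against $v_j$ (respectively $w_j$) and exploiting three ingredients: the non-positivity $\langle M_{h,k}v,v\rangle\le 0$ recorded in the preceding lemma (which reduces to the symmetric case $\langle\Delta_h v,v\rangle\le 0$ because the antisymmetric part carries a purely imaginary, odd Fourier symbol), the bistable sign of $g'(\Phi_{*;a})$ near $\pm\infty$, and the derivative bound from Lemma \ref{lem:bates:approx:bnd:on:deriv}. The bistability traps most of the $L^2$ mass of $v_j$ inside a compact interval, on which Rellich--Kondrachov yields strong convergence to $V_*$.

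First I invoke $(Hg)$ and the compactness of $\mathcal{A}_\diamond\subset(0,1)$ to fix constants $L>0$, $\alpha>0$, and $C>0$, depending only on $\mathcal{A}_\diamond$, such that $g'(\Phi_{*;a}(\xi);a)\le-\alpha$ whenever $|\xi|\ge L$ and $|g'(\Phi_{*;a}(\xi);a)|\le C$ on $\mathbb{R}$, uniformly in $a\in\mathcal{A}_\diamond$. Taking the $L^2$ inner product of the first line of \eqref{eq:lem:seq:id:for:v:w} with $v_j$, integrating by parts to kill $\sigma_{*;a_j}\langle v_j',v_j\rangle=0$, and dropping the non-negative quantity $-\langle M_{h_j,k_j}v_j,v_j\rangle$ produces
\begin{equation*}
-\langle g'(\Phi_{*;a_j})v_j,v_j\rangle_{L^2}\;\le\;\langle y_j,v_j\rangle_{L^2}+\delta\|v_j\|_{L^2}^2 .
\end{equation*}
Splitting the integral on the left at $|\xi|=L$ and inserting the pointwise bounds on $g'$ then yields
\begin{equation*}
\alpha\int_{|\xi|\ge L}v_j^2\;\le\;C\int_{|\xi|<L}v_j^2+\|y_j\|_{L^2}\|v_j\|_{L^2}+\delta\|v_j\|_{L^2}^2 .
\end{equation*}

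To convert this into a lower bound on $\|V_*\|_{H^1}^2$ I combine two ingredients. On the one hand, $\|v_j\|_{H^1}=1$ together with Lemma \ref{lem:bates:approx:bnd:on:deriv} gives $\|v_j\|_{L^2}^2\ge(1+K_2)^{-1}(1-K_2\|y_j\|_{L^2}^2)$, i.e.\ a uniform lower bound on the total mass in terms of $\|y_j\|_{L^2}^2$ alone. On the other hand, Rellich--Kondrachov applied to the bounded sequence $v_j$ in $H^1(-L,L)$ forces strong convergence $v_j\to V_*$ in $L^2(-L,L)$, hence $\int_{|\xi|<L}v_j^2\to\int_{|\xi|<L}V_*^2\le\|V_*\|_{H^1}^2$. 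Passing to a subsequence along which $\|y_j\|_{L^2}$ converges to some $y_\infty\le\mu(\delta)$ (by \eqref{eq:lem:lowBndInv:limit}) and applying Young's inequality $\|y_j\|\|v_j\|\le\epsilon\|v_j\|^2+(4\epsilon)^{-1}\|y_j\|^2$ in the split estimate, I obtain after rearranging
\begin{equation*}
(1+C/\alpha)\|V_*\|_{H^1}^2\;\ge\;\frac{1-K_2 y_\infty^2}{1+K_2}-\frac{\epsilon+\delta_0}{\alpha}-\frac{y_\infty^2}{4\epsilon\alpha}.
\end{equation*}
Choosing $\epsilon>0$ small but fixed and then shrinking $\delta_0$ so that the $\delta$-independent constant on the right remains strictly positive, the claimed inequality $\|V_*\|_{H^1}^2\ge K_3-K_4\mu(\delta)^2$ follows, with $K_3,K_4>0$ depending only on $\mathcal{A}_\diamond$.

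The bound on $\|W_*\|_{H^1}^2$ is obtained by the mirror argument: pair the second line of \eqref{eq:lem:seq:id:for:v:w} with $w_j$, invoke the adjoint version $\langle M_{h,k}^{\mathrm{adj}}w,w\rangle\le 0$ from the preceding lemma, and apply the corresponding $w_j$-estimate of Lemma \ref{lem:bates:approx:bnd:on:deriv}. The main delicacy throughout is the coordinated choice of constants: $L$, $\alpha$, $C$ and $K_2$ must all be chosen uniformly over $\mathcal{A}_\diamond$ and $k\ge 1$, so that the resulting $K_3,K_4$ are independent of $\delta$. This uniformity rests on the continuous dependence of $\Phi_{*;a}$ on $a$ (standard for the bistable ODE) and on the uniform positivity $\sigma_{*;a}>0$ over $\mathcal{A}_\diamond\subset\mathcal{A}_*^+$, the same sign condition already exploited in Lemma \ref{lem:bates:approx:bnd:on:deriv}.
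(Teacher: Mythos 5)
Your proposal is correct and follows essentially the same route as the paper: test the equation for $v_j$ against $v_j$, drop the sign-definite term $\langle M_{h_j,k_j}v_j,v_j\rangle_{L^2}\le 0$, use the bistable sign of $g'(\Phi_{*;a};a)$ outside a compact interval together with the derivative bound of Lemma \ref{lem:bates:approx:bnd:on:deriv} to get a uniform lower bound on $\norm{v_j}_{L^2}$ modulo $\norm{y_j}_{L^2}^2$, and pass to the limit via compact embedding on $(-L,L)$ and $\limsup_j\norm{y_j}_{L^2}\le\mu(\delta)$. The only deviations are cosmetic bookkeeping — you retain the $\delta\norm{v_j}_{L^2}^2$ term and possibly shrink $\delta_0$ (harmless, since Proposition \ref{prp:bates:lowBoundOnE} allows choosing $\delta_0$), and you use Lemma \ref{lem:bates:approx:bnd:on:deriv} through the algebraic substitution $\norm{v_j}_{L^2}^2\ge(1+K_2)^{-1}(1-K_2\norm{y_j}_{L^2}^2)$ rather than the paper's weighted addition — so the argument matches the paper's proof.
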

\begin{proof}
Pick $m > 1$ and $\alpha > 0$
in such a way that
\begin{equation}
 g'\big(\Psi_{*;a_j}(\tau);a_j\big) \le - \alpha
\end{equation}
holds for all $\abs{\tau} \ge m$ and all $j$.
This is possible on account
of the fact that
$g'(0;a_j) < 0$ and $g'(1;a_j) < 0$, the compactness of $\mathcal{A}_\diamond$
and the smoothness of $g$.

We now expand the identity
\begin{equation}
\langle \mathcal{L}_{h_j,k_j;a_j} v_j - \delta v_j ,
v_j
\rangle_{L^2} = \langle y_j,
  v_j \rangle_{L^2}
\end{equation}
to obtain the estimate
\begin{equation}
\begin{array}{lcl}
 \langle y_j, v_j \rangle_{ L^2 }
& = & - \sigma_{*;a} \langle v_j', v_j \rangle_{L^2 }
   - \delta \langle v_j, v_j \rangle_{L^2}
\\[0.2cm]
& & \qquad
+   \langle 
  M_{h_j,k_j} v_j, v_j \rangle_{L^2}
+ \langle  g'(\Psi_{*;a_j};a_j) v_j, v_j \rangle_{L^2}
\\[0.2cm]
\end{array}
\end{equation}
Using $\langle v_j', v_j \rangle_{L^2} = 0$,
and the inequality \eqref{eq:m:h:k:ineq:v:v}
we find
\begin{equation}
\begin{array}{lcl}
 \langle y_j, v_j \rangle_{ L^2 }
& \le &
- \alpha \norm{v_j}^2_{L^2 }
  + C_1'
    \int_{-m}^m \abs{v_j(\tau)}^2 \,  d \tau 
\end{array}
\end{equation}
for some $C_1'> 0$.
Using the basic inequality
\begin{equation}
xy =(\sqrt{\alpha}x)(y/\sqrt{\alpha}) \le \frac{\alpha}{2} x^2 + \frac{1}{2\alpha} y^2,
\end{equation}
we arrive at
\begin{equation}
\label{eq:lem:bates:prf:mainProp:bndOnOuterV}
\begin{array}{lcl}
C_1'  \int_{-m}^m \abs{v_j(\tau)}^2 \, d \tau
& \ge &
\alpha \norm{v_j}^2_{ L^2  }
  - \norm{y_j}_{L^2  } \norm{v_j}_{L^2  }
\\[0.2cm]
& \ge & \frac{\alpha}{2} \norm{v_j}^2_{ L^2  }
  - \frac{1}{2\alpha} \norm{y_j}^2_{L^2  }.
\\[0.2cm]
\end{array}
\end{equation}
Multiplying
the first inequality in
\eqref{eq:bates:deriv:up:bnds}
by $\frac{\alpha}{2(1 + K_2)}$,
we find
\begin{equation}
\label{eq:lem:linop:prf:mainProp:restated:deriv:est}
0 \ge \frac{\alpha}{2(1 + K_2)}
\norm{v'_j}_{L^2}^2
- \frac{\alpha K_2}{2(1 + K_2)} \big( \norm{v_j}_{L^2}^2
+ \norm{y_j}_{L^2}^2 \big).
\end{equation}
Adding \eqref{eq:lem:bates:prf:mainProp:bndOnOuterV}
and \eqref{eq:lem:linop:prf:mainProp:restated:deriv:est},
we may use
the identity
\begin{equation}
\frac{\alpha}{2}
- \frac{\alpha K_2}{2(1 + K_2)}
= \frac{\alpha}{2(1 + K_2)},
\end{equation}
to obtain
\begin{equation}
\begin{array}{lcl}
C_1'
  \int_{-m}^m \abs{v_j(\tau)}^2 \, d \tau
& \ge &
\frac{\alpha}{2(1 + K_2)}
\big[ \norm{v_j}_{L^2}^2 + \norm{v'_j}_{L^2}^2 \big]
  - C_2' \norm{y_j}^2_{L^2  }
\\[0.2cm]
& = & \frac{\alpha}{2(1 + K_2)}
  - C_2' \norm{y_j}^2_{L^2  }
\\[0.2cm]
\end{array}
\end{equation}
for some $C_2'> 0$.
In view of the bound
\begin{equation}
  \limsup_{j \to \infty}
  \norm{y_j}^2_{ L^2 }  \le \mu(\delta)^2,
\end{equation}
the strong convergence
$v_j \to V_* \in L^2([-M, M])$
implies that
\begin{equation}
\label{eq:linop:strongConvergenceCompactSubsets:resultingBounds:ge}
\norm{V_*}^2_{H^1 } \ge
 [C_1']^{-1}
 \big[\frac{\alpha}{2(1 + K_2)}
  - C_2' \mu(\delta)^2 \big],
\end{equation}
as desired. The bound for $W_*$
follows in a very similar fashion.
\end{proof}

\begin{proof}[Proof of Proposition
\ref{prp:bates:lowBoundOnE}]
For any $0 < \delta < \delta_0$ and $k \ge 1$, Lemma's
\ref{lem:bates:approx:upBnd} and
\ref{lem:bates:approx:lowBnd} show that
the function $V_*$ defined in
Lemma \ref{lem:bates:approxSetting}
satisfies
\begin{equation}
\label{eq:linop:strongConvergenceCompactSubsets:resultingBounds:final}
K_1^2 \mu(\delta)^2 \ge
\norm{V_*}^2_{H^1 }
 \ge K_3 - K_4 \mu(\delta)^2,
\end{equation}
which gives
$\big(K_1^2 + K_4\big) \mu(\delta)^2 \ge K_3 > 0$,
as desired.
The same computation works for
$\mu^{\mathrm{adj}}$.
\end{proof}

\section{Nonlinear bounds}
\label{sec:nl}

In this section we establish Proposition 
\ref{prop:fxp:reformulation:exist} by
obtaining bounds on the nonlinearities $\mathcal{R}_A$
and $\mathcal{R}_B$. The computations are relatively
straightforward and included for completeness.

\begin{lemma}
\label{lem:nl:A}
Consider the setting of Theorem \ref{thm:mr:main}
and Proposition \ref{prop:fxp:reformulation:exist}.
There exists $K >0$ so that
for any $0 < \mu < 1$,
any $0 < h < 1$ and any $a \in \mathcal{A}_\diamond$,
the estimate
\begin{equation}
\begin{array}{lcl}
\norm{\mathcal{R}_A(\overline{c}, v; a) }_{L^2}
 &\le&
    K \mu^2 
\\[0.2cm]
\end{array}
\end{equation}
holds for each $(\overline{c},v) \in \mathcal{Z}_{\mu}$,
while the estimate
\begin{equation}
\begin{array}{lcl}
\norm{\mathcal{R}_A(\overline{c}^{(2)}, v^{(2)};a)
  - \mathcal{R}_A(\overline{c}^{(1)}, v^{(1)};a)
 }_{L^2}
 &\le&
   K \mu
    \norm{
      (\overline{c}^{(2)} - \overline{c}^{(1)}, v^{(2)} - v^{(1)} )
    }_{\mathbb{R} \times H^1 }
\\[0.2cm]
\end{array}
\end{equation}
holds for each set of pairs
$(\overline{c}^{(1)},v^{(1)}) \in \mathcal{Z}_{\mu}$
and $(\overline{c}^{(2)},v^{(2)}) \in \mathcal{Z}_{\mu}$.
\end{lemma}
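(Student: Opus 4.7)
The plan is to split the nonlinearity in two pieces: the convective term $\overline{c} v'$ and the bistable remainder
\begin{equation*}
R_g(v;a) := g(\Phi_{*;a} + v; a) - g(\Phi_{*;a}; a) - g_u(\Phi_{*;a}; a) v,
\end{equation*}
and estimate each separately. The convective term is easy: Cauchy-Schwarz and the definition of $\mathcal{Z}_\mu$ give $\norm{\overline{c} v'}_{L^2} \le |\overline{c}|\,\norm{v}_{H^1} \le \mu^2$, and bilinearity of $(\overline{c},v)\mapsto \overline{c} v'$ together with $|\overline{c}^{(i)}| + \norm{v^{(i)}}_{H^1} \le \mu$ yields the matching Lipschitz estimate $K\mu \norm{(\overline{c}^{(2)}-\overline{c}^{(1)}, v^{(2)}-v^{(1)})}_{\mathbb{R}\times H^1}$ after adding and subtracting a common term.

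The substantive work is the remainder $R_g$. The first key observation is that in one spatial dimension the Sobolev embedding $H^1 \hookrightarrow L^\infty$ yields $\norm{v}_{L^\infty} \le C \norm{v}_{H^1} \le C \mu \le C$ for every $(\overline{c},v) \in \mathcal{Z}_\mu$. Since $\Phi_{*;a}$ takes values in $[0,1]$, the arguments $\Phi_{*;a} + sv$ that appear below all lie in a fixed compact subset of $\mathbb{R}$ independent of $a$, $v$ and $s \in [0,1]$. Combined with the compactness of $\mathcal{A}_\diamond \subset \mathcal{A}_*^+$ and the (sufficient) smoothness of $g$, this furnishes uniform bounds on $g_u$ and $g_{uu}$ over all quantities that will enter the Taylor expansions. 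Writing
\begin{equation*}
R_g(v;a)(\xi) = v(\xi)^2 \int_0^1 (1-s) g_{uu}\big(\Phi_{*;a}(\xi) + s v(\xi); a\big) \, ds,
\end{equation*}
I get the pointwise bound $|R_g(v;a)(\xi)| \le C |v(\xi)|^2$ with $C$ uniform in $a$, and therefore $\norm{R_g(v;a)}_{L^2} \le C \norm{v}_{L^\infty} \norm{v}_{L^2} \le C' \norm{v}_{H^1}^2 \le C' \mu^2$, as required.

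For the Lipschitz estimate on $R_g$, set $\Delta v = v^{(2)} - v^{(1)}$ and use the fundamental theorem of calculus in the $v$-variable to write
\begin{equation*}
R_g(v^{(2)};a) - R_g(v^{(1)};a) = \int_0^1 \Big[ g_u\big(\Phi_{*;a} + v^{(1)} + t \Delta v; a\big) - g_u\big(\Phi_{*;a}; a\big) \Big] \Delta v \, dt.
\end{equation*}
The bracket is pointwise bounded by $C\,\big(|v^{(1)}| + |v^{(2)}|\big)$ via another application of the mean value theorem to $g_u$ (using again that all arguments live in a fixed compact set), so
\begin{equation*}
\norm{R_g(v^{(2)};a) - R_g(v^{(1)};a)}_{L^2} \le C \max_{i=1,2} \norm{v^{(i)}}_{L^\infty} \norm{\Delta v}_{L^2} \le C'' \mu\, \norm{\Delta v}_{H^1}.
\end{equation*}
Adding this to the linear Lipschitz bound from the $\overline{c} v'$ piece completes the argument. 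There is no real obstacle here, only book-keeping: the essential ingredients are the one-dimensional Sobolev embedding, the uniform boundedness of $\Phi_{*;a}$ and its second-order Taylor constants furnished by the compactness of $\mathcal{A}_\diamond$, and the second-order vanishing of $R_g$ at $v=0$.
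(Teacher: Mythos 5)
Your argument is correct and follows essentially the same route as the paper: the same splitting of $\mathcal{R}_A$ into the bilinear convective term $\overline{c}v'$ and the Taylor remainder of $g$, with the embedding $H^1 \hookrightarrow L^\infty$ and the compactness of $\mathcal{A}_\diamond$ furnishing the uniform pointwise quadratic and Lipschitz bounds. You merely make explicit (via the integral-form Taylor expansion with $g_{uu}$) the pointwise estimates that the paper states directly, so no further comparison is needed.
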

\begin{proof}
The first term in $\mathcal{R}_A$ can be handled
by the elementary estimates
\begin{equation}
\begin{array}{lcl}
\norm{\overline{c} v'}_{L^2} & \le &
 \abs{\overline{c} } \norm{v}_{H^1},
\\[0.2cm]
\norm{\overline{c}^{(2)} [v^{(2)}]'
- \overline{c}^{(1)}[v^{(1)}]' }_{L^2}
& \le & \abs{\overline{c}^{(2)} - \overline{c}^{(1)} }
          \norm{v^{(2)}}_{H^1}
        + \abs{\overline{c}^{(1)} }
          \norm{v^{(1)} - v^{(2)} }_{H^1}.
\end{array}
\end{equation}
Writing    $ \mathcal{N}(v;a)  = g(\Phi_{*;a}+v;a) -  g(\Phi_{*;a};a) - g_u(\Phi_{*;a};a)v$, 
we obtain the pointwise bounds
\begin{equation}
\begin{array}{lcl}
    |\mathcal{N}( v;a)(\xi)|  &\le & C' \abs{v(\xi)}^2,
\\[0.2cm]
    |\mathcal{N}( v^{(1)};a)(\xi) 
    - \mathcal{N}(v^{(2)};a)(\xi)| 
    &\le & C' \big[\abs{v^{(1)}(\xi)} +\abs{v^{(2)}(\xi)} \big] \abs{v^{(1)}(\xi) - v^{(2)}(\xi) }
\end{array}
\end{equation}
for some $C'> 0$
as a consequence of the a-priori bounds on $\norm{v}_\infty$, 
$\norm{v}^{(1)}_\infty$ and $\norm{v}^{(2)}_\infty$. In particular,
we find
\begin{equation}
\begin{array}{lcl}
    \norm{ \mathcal{N}(v;a) }_{L^2}
    & \le & C' \norm{v}_{H^1} \norm{v}_{L^2},
    \\[0.2cm]
    \norm{ \mathcal{N}( v^{(1)};a)
    - \mathcal{N}( v^{(2)};a)}_{L^2} 
    &\le& C' \big[ \norm{v^{(1)}}_{H^1} + \norm{v^{(2)}}_{H^1} + \norm{v^{(1)} - v^{(2)}}_{L^2} \big].
\end{array}
\end{equation}
The desired bounds follow
readily from these estimates.
\end{proof}

\begin{lemma}
\label{lem:nl:B}
Consider the setting of Theorem \ref{thm:mr:main}
and Proposition \ref{prop:fxp:reformulation:exist}.
There exists $K > 0$
so that for each $0 < h < 1$,
every $k \ge 1$ and each $a \in \mathcal{A}_\diamond$
we have the bound
\begin{equation}
\begin{array}{lcl}
\norm{
  \mathcal{R}_B(h,k;a)
}_{L^2}
  & \le & K h .
\\[0.2cm]
\end{array}
\end{equation}
\end{lemma}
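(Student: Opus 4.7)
The plan is to reduce the claim directly to the uniform Taylor-type estimate \eqref{eq:lim:m:h:k:unif:c3} already proved for the operator $M_{h,k}$. Indeed, by the very definition of $\mathcal{R}_B$, the quantity we wish to control is exactly $M_{h,k}\Phi_{*;a} - \Phi_{*;a}''$, and \eqref{eq:lim:m:h:k:unif:c3} yields
\begin{equation*}
    \norm{\mathcal{R}_B(h,k;a)}_{L^2} \le 2 h \, \norm{(\Phi_{*;a})_{3,\infty}}_{L^2}
\end{equation*}
for $0 < h < 1$ and $k \ge 1$, provided that $\Phi_{*;a} \in C^3$ with $(\Phi_{*;a})_{3,\infty} \in L^2$. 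The entire task therefore reduces to showing that $\norm{(\Phi_{*;a})_{3,\infty}}_{L^2}$ is bounded uniformly over $a \in \mathcal{A}_\diamond$.

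To handle regularity, I would bootstrap on the limiting ODE $-\sigma_{*;a} \Phi_{*;a}' = \Phi_{*;a}'' + g(\Phi_{*;a};a)$: since $g$ is $C^1$ and $\Phi_{*;a}$ is bounded, one obtains $\Phi_{*;a} \in C^3$ by differentiating the ODE twice (a single additional derivative of $g$ would give $C^4$, but $C^3$ suffices here; alternatively one simply invokes standard smoothness results for bistable travelling waves). For the $L^2$ control of $(\Phi_{*;a})_{3,\infty}$, the key ingredient is the exponential decay of $\Phi_{*;a}'$, $\Phi_{*;a}''$, $\Phi_{*;a}'''$ towards $\pm\infty$, which follows from the hyperbolicity of the asymptotic equilibria $0$ and $1$ (guaranteed by $g'(0;a)<0$ and $g'(1;a)<0$).

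The main step is making these exponential tails uniform in $a \in \mathcal{A}_\diamond$. Because $\mathcal{A}_\diamond$ is compact and $g$ is $C^1$, the quantities $g'(0;a)$, $g'(1;a)$ and $\sigma_{*;a}$ admit strictly positive, respectively strictly negative, uniform bounds. A standard phase-plane / stable manifold argument then gives decay rates $\rho > 0$ and prefactors $C > 0$, independent of $a \in \mathcal{A}_\diamond$, for which
\begin{equation*}
    |\Phi_{*;a}'(\xi)| + |\Phi_{*;a}''(\xi)| + |\Phi_{*;a}'''(\xi)| \le C e^{-\rho |\xi|}, \qquad \xi \in \R.
\end{equation*}
Combined with the $C^1$ dependence of $\Phi_{*;a}$ on $a$ on compact $\xi$-intervals (Proposition \ref{prop:main:MP} applied at the ODE level), this produces a uniform bound on $(\Phi_{*;a})_{3,\infty}(\xi)$ by an exponentially decaying function, hence a uniform bound on its $L^2$-norm.

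The potential obstacle is precisely this uniformity: one must check that the $C^1$ smoothness from Proposition \ref{prop:main:MP} combined with the hyperbolic-equilibrium argument genuinely gives constants $C$, $\rho$ independent of $a \in \mathcal{A}_\diamond$. This is standard but the only non-cosmetic part of the argument. Once it is in place, setting $K := 2 \sup_{a \in \mathcal{A}_\diamond} \norm{(\Phi_{*;a})_{3,\infty}}_{L^2}$ closes the proof.
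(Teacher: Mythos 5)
Your proposal is correct and follows essentially the same route as the paper: the paper's proof also reduces $\mathcal{R}_B = M_{h,k}\Phi_{*;a} - \Phi_{*;a}''$ to the Taylor estimate \eqref{eq:lim:m:h:k:unif:c3} (the citation of \eqref{eq:m:h:k:ineq:v:vp} there appears to be a typo) combined with the exponential decay of $\Phi_{*;a}'''$ and the compactness of $\mathcal{A}_\diamond$, which is exactly the uniformity argument you spell out. Your bootstrap and uniform-hyperbolicity details simply make explicit what the paper leaves as ``readily follows.''
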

\begin{proof}
In view of \eqref{eq:m:h:k:ineq:v:vp}, this follows readily
from the exponential decay of the functions $\Phi_{*;a}'''$
and the compactness of $\mathcal{A}_\diamond$.
\end{proof}

\begin{proof}[Proof of Proposition \ref{prop:fxp:reformulation:exist}]
Upon writing
\begin{equation}
    \mathcal{T}_{h,k;a}(\overline{c}, v) = [ \beta_{h,k;a}, \mathcal{V}_{h,k;a}] \big( \mathcal{R}_A(\overline{c}, v;a) + \mathcal{R}_B(h,k;a)\big),
\end{equation}
Lemma's \ref{lem:nl:A}-\ref{lem:nl:B}
provide the bounds
\begin{equation}
\begin{array}{lcl}
    \norm{T_{h,k;a} (\overline{c}, v) }_{\mathbb{R} \times H^1} 
    & \leq  & C' [\mu^2 + h ] ,
     \\[0.2cm]
    \norm{\mathcal{T}_{h,k;a} (\overline{c}^{(1)}, v^{(1)}) - \mathcal{T}_{h,k;a} (\overline{c}^{(2)}, v^{(2)} ) }_{\mathbb{R} \times H^1} 
    & \leq &
      C' \mu 
    \norm{
      (\overline{c}^{(2)} - \overline{c}^{(1)}, v^{(2)} - v^{(1)} )
    }_{\mathbb{R} \times H^1}
\end{array}
\end{equation}
for some $C'> 0$.
In particular, upon taking $\mu = 2 C' h$ and $h$ sufficiently small,
we see that $\mathcal{T}_{h,k;a}$ maps $\mathcal{Z}_\mu$ to $\mathcal{Z}_{\mu}$ and is a contraction, which yields
the result.
\end{proof}

\bibliographystyle{klunumHJ}
\bibliography{ref}

\end{document}